\newtheorem{definition}{Definition}
\newtheorem{theorem}{Theorem}
\newtheorem{lemma}{Lemma}
\newtheorem{example}{Example}
\newtheorem{remark}{Remark}
\renewcommand{\hat}{\widehat}
\renewcommand{\tilde}{\widetilde}
\newcommand{\bR}{\mathbb{R}}
\newcommand{\R}{\mathbb{R}}
\newcommand{\bC}{\mathbb{C}}
\newcommand{\C}{\mathbb{C}}
\newcommand{\st}{^\star}
\newcommand{\bE}{\mathbb{E}}
\newcommand{\trasp}{^\mathsf{T}}
\newcommand{\Rinv}{R^{-1}}
\newcommand{\te}{t_{\rm f}}
\newcommand{\Vdiff}{\mV^{\rm{diff}}_{[E,A,B]}}
\newcommand{\Vsys}{\mV^{\rm{sys}}_{[E,A,B]}}
\newcommand{\expi}{\mathrm{e}}
\DeclareMathOperator{\tr}{tr}
\DeclareMathOperator{\rk}{rank}
\DeclareMathOperator{\re}{Re}
\newcommand{\cI}{\mathcal{I}}
\newcommand{\mV}{\mathcal{V}}
\newcommand{\intl}[4]{\int_{#1}^{#2}{#3}\,\mathrm{d}{#4}}
\newcommand{\dd}{\mathrm{d}}
\newcommand{\Vinit}{L^2_{\mathcal{F}_0}\big(\Vdiff\big)}
\newcommand{\EVinit}{L^2_{\mathcal{F}_0}\big(E\Vdiff\big)}
\title{Discounted Cost Linear Quadratic Gaussian Control for Descriptor Systems}
\author{Hermann Mena, Lena-Maria Pfurtscheller, Matthias Voigt}
\author{
	\name{Hermann Mena\textsuperscript{a}, Lena-Maria 
Pfurtscheller\textsuperscript{b} and Matthias 
Voigt\textsuperscript{c,}\textsuperscript{d}}\thanks{CONTACT M.~Voigt. Email: matthias.voigt@uni-hamburg.de}
	\affil{\textsuperscript{a} Universidad Yachay Tech, Department of 
Mathematics, San Miguel de Urcuqu\'{i}, Ecuador; \textsuperscript{b} 
Universit\"{a}t Innsbruck, Institut f\"{u}r Mathematik, Austria 
\textsuperscript{c}	Universit\"{a}t Hamburg, Fachbereich Mathematik, Germany; 
\textsuperscript{d} Technische Universit\"{a}t Berlin, Institut f\"{u}r 
Mathematik, Germany}}
\begin{document}
	\maketitle
	
	\begin{abstract}	
	
	We consider the linear quadratic Gaussian control problem with a discounted cost functional for descriptor 
systems on the infinite time horizon. Based on recent results from the 
deterministic framework, we characterize the feasibility of this problem using a 
linear matrix inequality. In particular, conditions for existence and uniqueness of optimal controls are derived, which are weaker compared to the standard 
approaches in the literature.  We further show that also for the stochastic 
problem, the optimal control is given in terms of the stabilizing solution of 
the Lur'e equation, which generalizes the algebraic Riccati equation. 
	\end{abstract}
	
	\begin{keywords}
		differential-algebraic systems; descriptor systems; linear quadratic Gaussian control; Kalman-Yakubovich-Popov inequality; dissipation inequality
	\end{keywords}
	
	\section{Introduction}
	We consider the stochastic system with additive noise
	\begin{equation}
	\dd E x(t) = (Ax(t) + Bu(t))\dd t + N(t)\dd w(t), \quad 	Ex(0) = Ex_0, \label{eq:lqg}\\
	\end{equation}
	where the matrices $E,\,A \in \bR^{n\times n}$ are such that $\det(s 
E-A) \in \bR[s]$ is not the zero polynomial. A matrix pencil $sE-A \in \R[s]^{n \times n}$ fulfilling this 
condition is called \emph{regular}. Let $B \in \bR^{n\times m}$, $N :\bR \to \bR^{n 
\times n_w}$ and $w$ be an $n_w$-dimensional Wiener process. The stochastic process $u:\bR \times \Omega 
\to \bR^m$ is called \emph{input} and $x:\bR \times \Omega \to \bR^n$ \emph{generalized 
state}. We assume that the initial value $Ex_0$ is a random variable that is independent of $w$. 
	We denote the set of such systems~\eqref{eq:lqg} by $\Sigma_{n,m,n_w}$ and 
we write $[E,A,B,N] \in \Sigma_{n,m,n_w}$. Throughout this paper, the 
following notation is used: $(\Omega, \mathcal{A}, (\mathcal{F}_t)_{t\geq 0}, 
\mathbb{P})$ is a complete probability space with sample space $\Omega$, 
$\sigma$-algebra $\mathcal{A}$, filtration $(\mathcal{F}_t)_{t\geq 0}$ and 
probability measure $\mathbb{P}$. We denote by $\bE[\cdot]$ the expectation 
operator with respect to the probability measure $\mathbb{P}$.

%\todo[inline]{Wir m\"ussen uns noch eine Notation f\"ur die Anfangswerte \"uberlegen, die von $w$ (oder reicht $w(0)?$) unabh\"angig sind, da sonst einige der Dinge, die weiter unten kommen, nicht wohldefiniert sind. Irgendwelche Ideen?}

For an interval $\cI := [t_1,t_2]$ with $t_1,\,t_2 \in \R \cup \{-\infty,\infty\}$ and $t_1 \le t_2$, we consider stochastic processes $\varphi: \cI \times \Omega 
\to \mathbb{R}^k$. Here we often use the short-hand 
$\varphi(t) := \varphi(t,\cdot)$. In particular, 
$\varphi(\cdot,\omega)$ is interpreted as a particular realization of the 
process while $\varphi(\cdot)$ contains information about its entire 
distribution.
Let 
% \begin{multline*}
% L^2_{\omega}\big(\cI, L^2(\Omega,\mathbb{R}^k)\big) :=  \left\{ 
% \vphantom{\int_1^2} \varphi : \cI \to L^2(\Omega,\mathbb{R}^k) : 
% \varphi(\cdot) := (\varphi(t))_{t \in \cI} \; 
% \text{is} \right. \\ \left. (\mathcal{F}_t)_{t \in \cI} \, 
% \text{non-anticipating and }  
% \int_\cI {\bE\left[\Vert \varphi(t) \Vert^2\right]}{\mathrm{d}t} < \infty 
% \right\},
% \end{multline*}
\begin{multline*}
L^{2,\rm loc}_{\mathcal{F}}(\cI, \bR^k ) :=  \left\{ 
\vphantom{\int_1^2} \varphi : \cI \times \Omega \to \mathbb{R}^k : 
\varphi := (\varphi(t))_{t \in \cI} := (\varphi(t,\cdot))_{t \in \cI} \; 
\text{is} \right. (\mathcal{F}_t)_{t \in \cI}  
 \\ \left. \text{non-anticipating and }  
\int_\mathcal{K} {\bE\left[\Vert \varphi(t) \Vert^2\right]}{\mathrm{d}t} < \infty \text{ for each compact subset } \mathcal{K} \subseteq \cI \right\},
\end{multline*}
where $\| \cdot \|$ denotes the Euclidean norm.
%see also \cite{D04}. 
For a subspace $\mathcal{V} \subseteq \bR^k$, we also often make use of the notation 
\begin{multline*}
L^{2,\rm loc}_{\mathcal{F}}(\cI, \mathcal{V} ) := \left\{ \varphi \in L^{2,\rm loc}_{\mathcal{F}}(\cI, \bR^k ) \; : \; \varphi(t,\omega) \in \mathcal{V} \right. \\  \text{for all } t \in \cI \text{ and almost all } \omega \in \Omega \Big\}.
\end{multline*}

Equations of the form~\eqref{eq:lqg} are called stochastic differential 
algebraic equations (SDAEs) or stochastic descriptor systems and the processes 
$(x,u) \in L^{2,\rm loc}_{\mathcal{F}}(\cI, \bR^n ) \times 
L^{2,\rm loc}_{\mathcal{F}}(\cI, \bR^m )$ are called 
\emph{solutions} of $[E,A,B,N]$ on $[0,\te]$, if they solve~\eqref{eq:lqg} on 
the time interval $[0,\te]$. The latter means that
\begin{equation} \label{eq:solution}
 Ex(t,\omega) = Ex_0(\omega) + \int_{0}^t Ax(s,\omega) \mathrm{d}s + \int_{0}^t 
Bu(s,\omega) \mathrm{d}s + \int_{0}^t N \mathrm{d}w(s)  
\end{equation}
is satisfied for all $t \in [0,\te]$ and for almost all $\omega \in \Omega$. In 
\eqref{eq:solution}, the first two integrals have to be understood in the 
Lebesgue sense, while the third integral is an It\^o integral \citep{L74}. 

Note that we need to restrict the set of initial conditions to those that are independent of the noise $w$, so they live in 
\begin{equation*}
 L^2_{\mathcal{F}_0}(\bR^k) := \left\{ x_0 : \Omega \to \R^k \; : \; x_0 \text{ is } \mathcal{F}_0 \text{ non-anticipating and } \bE\big[\|x_0\|^2\big] < \infty \right\},
\end{equation*}
and for a subspace $\mathcal{V} \subseteq \R^k$ we define
\begin{equation*}
 L^2_{\mathcal{F}_0}(\mathcal{V}) := \left\{ x_0 \in L^2_{\mathcal{F}_0}(\bR^k) \, : \, x_0(\omega) \in \mathcal{V} \text{ for almost all } \omega \in \Omega \right\},
\end{equation*}
see for instance~\citet[Section 4.3]{L74}.
%This construction is similar to the one used in the ordinary stochastic differential setting, where $(\mathcal{F}_t)_{t \in \cI}$ is chosen as the smallest $\sigma$-algebra with respect to which the initial value $x_0$ and the random variables $w(t)$ for $t \in \cI$, are measurable, see~\cite[Chapter 6]{L74}.
%\todo[inline]{pr\"azise Stelle im Arnold zitieren}

%Note that the solution trajectories are continuous but nowhere differentiable for almost all $\omega \in \Omega$. 

	We further consider the \emph{discounted cost functional}
	\begin{align}
	J(x,u,[0,\te]) = \bE\left[ \intl{0}{\te}{ \expi^{2\beta t}\begin{pmatrix}
		x(t) \\ u(t) \end{pmatrix}\trasp \begin{bmatrix} Q & S \\ S\trasp& R \end{bmatrix} 
		\begin{pmatrix}
		x(t) \\ u(t) \end{pmatrix}}{t} \right] \label{eq:ju}
	\end{align}  
	with $\beta < 0$, which is discussed e.\,g., in the works 
by~\citet{BWSV16,BS17} for the standard case $E = I_n$ . The goal is to 
minimize~\eqref{eq:ju} with respect to the state equation~\eqref{eq:lqg}, 
i.\,e., 
we want to minimize~\eqref{eq:ju} over all solutions $(x,u)$ of $[E,A,B,N] \in 
\Sigma_{n,m,n_w}$ on $[0,\te]$. Such problems form a special class of \emph{linear quadratic 
Gaussian (LQG) control problems}. 
	
	The deterministic counterpart of minimizing~\eqref{eq:lqg} with $\beta = 0$ 
with respect to the state equation
	\begin{align}
	\frac{\dd}{\dd t}Ex(t) = Ax(t) + Bu(t), \quad Ex(0) = Ex_0 \in \R^n \label{eq:det}
	\end{align}
	is called the \emph{linear quadratic regulator (LQR) problem} and
	has been widely studied in the literature. The case $E = I_n$ 
    has been analyzed in \cite{Wil71} in which 
    the connection of the LQR problem to a certain linear matrix inequality 
   (the Kalman-Yakubovich-Popov inequality) and quadratic matrix equation 
(the algebraic Riccati equation) has been exposed. A difficulty in 
this context are so-called \emph{singular LQR problems} for which optimal 
control functions do not exist or are not unique for some initial conditions of the ODE. In this case, the algebraic 
Riccati equation does not exist and one must resort to a linear matrix  
equation, called the Lur'e equation, see \cite{Rei11} and the references 
therein. Several attempts to generalize these concepts to differential algebraic equations have been undertaken, see 
e.\,g., \cite{Meh89,Meh91,Gee93,Gee94,KawTK99,KawK02,Mas06} for  
linear-time invariant LQR problems and \cite{KurM04,KunM08,KunM11} for 
time-varying and nonlinear optimal control problems. However, most of the 
analyses in the formerly mentioned works on the linear time-invariant LQR problem suffer from the fact that they 
are only valid if the system under consideration is impulse controllable or if 
the cost functional is nonnegative. A solution for this problem is presented 
in \cite{RRV15,RV18} in which these conditions are not required anymore. This 
analysis is based on a generalization of the Lur'e equation to differential 
algebraic equations instead of an extension of the algebraic Riccati equation. 
This approach turns out be more beneficial even for regular control problems. 
	
	As any real-world system is influenced by uncertainties such as 
environmental influence, stochastic systems are often more 
suitable than purely deterministic ones. If we consider the case that $E$ is 
invertible then the system can be easily transformed to the standard 
stochastic 
linear quadratic regulator problem~\citep{YZ99,RMZ02,SLY16,D04}. For the ODE case with additive 
noise, in~\citet{DGP99,DPD13} have shown that the problem is solvable if the 
matrix $R$ in the cost functional~\eqref{eq:ju} is positive definite and the 
optimal input is given in feedback form which depends on the solution of an 
algebraic Riccati equation. However, for deterministic descriptor systems, the 
problem may also be solvable even if $R$ is not positive definite, 
see~\citet{RV18} for some illustrative examples. 
%Then, one can often no longer 
%solve an algebraic Riccati equation, but its extension, the so-called Lur'e 
%equation.
%Thus, the first question naturally arises: 
%	\begin{center} 
%		Does the same hold true for the LQG 
%problem~\eqref{eq:lqg}--\eqref{eq:ju}?  
%	\end{center} 

One of the first works about descriptor systems with additive noise has been 
given 
by~\citet{D89lqg}, where the system is first transferred into 
Weierstra\ss~canonical form and an algorithm for solving the discrete LQG 
problem with noisy output is given. But, by first transferring the system into 
standard form and then solving the problem, one has to assume again that $R>0$ 
to ensure the solvability of the algebraic Riccati equation.~\citet{ZX14} 
considers a descriptor system with multiplicative noise and shows that the 
solution of this problem is given in terms of an algebraic Riccati equation, 
however, still with the positive definiteness restriction in the cost 
functional.~\citet{FCH13} considers the LQG problem in discrete-time with the 
assumption that $S=0$ and further
%\todo[inline]{Wozu die Nullbl\"ocke unten, das ist doch automatisch erf\"ullt, bitte nochmal die Ref. checken, ist $S=0$?}
	\[
	 \begin{bmatrix} Q & 0 \\ 0 & R \end{bmatrix} = \begin{bmatrix} G\trasp \\ D\trasp\end{bmatrix} 
\begin{bmatrix}  G & D\end{bmatrix} 
	\]
	with matrices $G$ and $D$ of appropriate dimensions. In particular, this 
means that $D\trasp G = G\trasp D = 0$. This restrictive assumption has been relaxed 
by~\citet{WL18}, who further considers systems with positive 
semi-definite 
weight matrices $Q = DD\trasp$, $R = GG\trasp$, if 
	\begin{align*}
	\rk\begin{bmatrix}
	0&E&0\\E&A&B
	\end{bmatrix} &= \rk\begin{bmatrix} E &A & N& B	\end{bmatrix} + \rk\, E ,\\
	\rk\begin{bmatrix}
	0&E&0\\E&A&B \\ 0 &G&0\\ 0&0&D	\end{bmatrix} &= n+m + \rk \, E.  	
	\end{align*}
	However, the weight matrix $R$ in the deterministic case can be even 
negative definite~\citep{RV18} while still ensuring existence of a feasible solution of the LQR problem.
	
	Moreover, a common assumption for stochastic descriptor systems is that the 
 uncontrolled system is impulse free, i.\,e., the DAE has at most index one; or 
the corresponding control system is impulse controllable. This means that there exists a feedback control $u(t) = Kx(t)$ such that the closed-loop system has index 
at most one. This assumption has been imposed, e.\,g., by~\citet{GS13}, who has  shown 
that the system is impulse free and mean-square exponentially stable if a 
certain linear matrix inequality is fulfilled. Similarly, the 
works~\citet{ZZS15,ZX14,HM10} deal solely with the impulse free problem. 
Only~\citet{XZ16} has derived conditions for stability and exact observability of 
discrete systems, which do not need to be impulse free, using generalized 
Lyapunov equations. Also~\citet{RV18} have not used the assumption of impulse 
controllability. 

%Hence, the second question we deal with is the following:
%	\begin{center} 
%		Does the LQG problem~\eqref{eq:lqg}--\eqref{eq:ju} have to be impulse 
%controllable to be solvable?  
%	\end{center} 
	In this work, we will derive conditions for feasibility, meaning that the 
control problem has a minimal value of the cost functional that will remain finite, 
and regularity, i.\,e., the control problem has a uniquely determined optimal solution trajectory for each consistent initial condition. In this work we will not impose any artificial conditions on the cost functional nor the index of the system just as in \cite{RV18}. We will use 
the dissipation inequality from~\citet{MP09} and extend it to stochastic 
descriptor systems, which in contrast to the works~\citet{ZZLMR16,RH16,XDQ18} 
holds also for the additive noise case. With the help of this inequality we give 
conditions for feasibility of the problem and show its equivalence to the solvability of the Kalman-Yakubovich-Popov inequality.
Moreover, we derive the optimal value of the cost functional 
as a function depending on the maximum solution of this inequality and show that 
also for the stochastic case, the proper extension of the algebraic Riccati 
equation is the Lur'e equation.
%In particular, this paper will be the 
%stochastic extension of the deterministic work of~\citet{RV18}. 
	
%	This paper is organized in the following way: Section 2 gives an 
%introduction to the problem and summarizes the known results for standard 
%state-space systems, in Section 3 we present stochastic preliminaries as well as basic facts of descriptor systems. We show the difficulty of introducing 
%noise to the system, as this could lead to the introduction of white noise in the solution or a differentiation of the nowhere 
%differentiable Wiener process. Hence we derive a condition to avoid it. We introduce 
%the stochastic system space and obtain a condition under which it is equal  
%to the deterministic system space introduced in~\citep{RRV15}. In Section 4 we  present results for the linear quadratic Gaussian control problem, i.\,e., we 
%give conditions for feasibility in terms of a linear matrix inequality. 
%Moreover, we discuss existence and uniqueness of the control problem. 
	
	\section{Formulation of the problem} \label{sec:formulation_nonsingular}
	
	The aim of this section is to give a proper formulation of the problem
    and 
some important definitions used throughout this paper. Moreover, we present the 
results for the particular case of $E$ being nonsingular. 

Assume that $N(t) \equiv N$, i.\,e., a constant matrix. We define the \emph{space of consistent initial differential variables} of 
$[E,A,B,N]\in \Sigma_{n,m,n_w}$ by
	\begin{multline*}
	\mathcal{V}^{\rm diff}_{[E,A,B,N]} := \left\{ x_0(\omega) \in \bR^{n}: \, \exists \, \text{a solution } \, (x,u) \; 
\text{of} \, [E,A,B,N] \right. \\ \left.
          \text{with} \, Ex(0) = Ex_0 \text{ and } (x(\cdot,\omega),u(\cdot,\omega)) \text{ satisfies \eqref{eq:solution}  on $\bR_{\ge 0}$ with } \omega \in \Omega \right\} .
	\end{multline*}
	Note that the space $\mathcal{V}^{\rm diff}_{[E,A,B,N]}$ is not well-defined in general. Conditions for its well-posedness will be devised in Section~\ref{sec:stochastic}.
	
	We further introduce a \emph{value function} 
	\[
	W_+: \EVinit \to \bR \cup\{-\infty,\infty \},
	\]
	which expresses the \emph{optimal cost}, defined by
	\begin{multline} 
	W_+(Ex_0) = \inf\left\{ J(x,u,\bR_{\ge 0}) \; : \; (x,u) \text{ is a 
solution of }  [E,A,B,N]  \text{ on } \bR_{\geq 0} \right. \\ \left.  
\text{with } Ex(0) = Ex_0 \text{ and } \bE[\Vert Ex(\infty)\Vert^2] = 0 \right\} , 
\label{eq:opcost}
	\end{multline}
	where $\bR_{\geq 0}$ denotes the set of nonnegative real numbers. 

	Assume first that $E$ is nonsingular. Then, all 
initial conditions are consistent and the space $\mathcal{V}^{\rm diff}_{[E,A,B,N]}$ is well-defined, i.\,e., $\mathcal{V}^{\rm diff}_{[E,A,B,N]} = \bR^n$ and one can left-multiply 
equation~\eqref{eq:lqg} by $E^{-1}$. If the system is stabilizable and $Q = Q\trasp \geq 0$, $S=0$, and $R = R\trasp > 0$, then the optimal cost for $\te = \infty$ is 
given by~\citep{BS17}
	\begin{align}
	W_+(Ex_0) = \tr\big(P \bE\big[Ex_0(Ex_0)\trasp\big]\big) - \frac{1}{2\beta} \tr\big( PNN\trasp\big), 
\label{eq:cost_invE}
	\end{align}
	where the matrix $P \in \bR^{n\times n}$ is the stabilizing solution of the algebraic Riccati 
equation
	\begin{align}
	0 = E\trasp P A_\beta + 
A_\beta\trasp P E + Q - \big(E\trasp P B )\Rinv \big(E\trasp P B \big)\trasp \label{eq:ARE}
	\end{align}
	with $A_\beta := A + \beta E$. 
	
	Now for our problem with possibly singular $E$ we define 
	\[
	\tilde{x}(t) := \expi^{\beta t} x(t) , \quad \tilde{u}(t) := \expi^{\beta t} 
u(t),
	\]
	and then we have
	\begin{align*}
	\dd E \tilde{x}(t) &=  \beta \expi^{\beta t} Ex(t) \dd t +  \expi^{\beta t} 
\dd Ex(t) \\&=  \beta \expi^{\beta t} Ex(t) \dd t + \expi^{\beta t} (A x(t) + 
Bu(t))\dd t + N\expi^{\beta t}\dd w(t)  
	\\&= \left(A_\beta \tilde{x}(t) + B \tilde{u}(t)\right) \dd t + N\mathrm{e}^{\beta t} \dd 
w(t) . 
	\end{align*}
	This yields the modified problem of minimizing
	\begin{align}
	J(\tilde{x},\tilde{u},\bR_{\ge 0}) = \bE\left[ \intl{0}{\infty}{\begin{pmatrix}
		\tilde{x}(t) \\ \tilde{u}(t) \end{pmatrix}\trasp \begin{bmatrix} Q 
& S \\ S\trasp& R \end{bmatrix} 
		\begin{pmatrix}
		\tilde{x}(t) \\ \tilde{u}(t) \end{pmatrix}}{t} \right] 
\label{eq:ju1}
	\end{align} 
	subject to
	\begin{align}
	\dd E\tilde{x}(t) = (A_\beta \tilde{x}(t) + B \tilde{u}(t))\dd t + N \mathrm{e}^{\beta t} \dd 
w(t) . \label{eq:lqgs}
	\end{align}
	This setting is closer to the deterministic setting, as we get rid of 
the exponential term in the cost functional. Thus, in the following we will 
study the problem~\eqref{eq:ju1}--\eqref{eq:lqgs} and write $x := \tilde{x}, u := 
\tilde{u}, N_{\beta}(t) := N\mathrm{e}^{\beta t}$, and $A := A_\beta$. Note, that the diffusion term in the equation is now time-dependent.
	
	It should be emphasized that the algebraic Riccati equation~\eqref{eq:ARE} 
is independent of $N$ and coincides with the Riccati equation arising in the 
deterministic case, see e.\,g.,~\citet{KS72}. Only the optimal 
value~\eqref{eq:cost_invE} depends on $N$ and coincides with the deterministic 
value if $N = 0$ and the initial condition is deterministic. This motivates to consider the deterministic setting 
in~\citet{V15,RV18} and view the stochastic case as an extension of the 
deterministic problem.~\citet{V15,RV18} have shown that even if the matrices in the 
cost functional are not necessarily positive (semi-)definite, the optimal value 
is still a quadratic function depending on the initial condition, i.\,e., for a consistent initial differentiable variable $Ex_0 \in \R^n$, the 
optimal value is given by
	\[
	x_0\trasp E\trasp P E x_0,
	\] 
	but $P=P\trasp \in\bR^{n\times n}$ solves a Lur'e equation instead of an 
algebraic Riccati equation. In~\citet{V15} this result was deduced for impulse 
controllable systems and~\citet{RV18} extends it to systems which can also be 
non-impulse controllable.

%%%%%%%%%%%%%%%%%%%%%%%%%%%%%%%%%%%%%%%%%%%%%%%%%%%%%%%%%%%%%%%%%%%%%%%%%%%%%
	\section{Stochastic descriptor systems}\label{sec:stochastic}
	In this section we give an overview on important definitions and basic 
concepts for both, stochastic calculus and the theory of descriptor systems. 
	
	Let us consider the uncontrolled stochastic differential equation
	\begin{equation}
	\dd x(t) = Ax(t) \dd t + N\mathrm{e}^{\beta t}\dd w(t) , \quad x(0) = x_0. \label{eq:sde}
	\end{equation}
	Then, the unique solution of this linear equation is given by
	\[
	x(t) = \expi^{At} x_0 + \intl{0}{t}{\expi^{A(t-s)} N\mathrm{e}^{\beta s} }{w(s)}, 
	\]
	where the stochastic integral is meant to be in the sense of 
It\^{o}~\citep{L74}. However, if we consider stochastic descriptor systems, the 
problem becomes more delicate, as their solutions are not always well-defined, as one can 
see in the following example. 
	
	\begin{example}\label{ex:noise}
		Consider
		\[ 
		\dd \begin{bmatrix}	0&1\\0&0 \end{bmatrix} \begin{pmatrix} x_1(t) \\ x_2 (t)	
\end{pmatrix} = \begin{bmatrix}	1&0\\0&1 \end{bmatrix} \begin{pmatrix} x_1(t) \\x_2(t)
\end{pmatrix} \dd t + \begin{bmatrix}
		n_1\\ n_2
		\end{bmatrix} \mathrm{e}^{\beta t}  \dd w(t) .
		\]
		The equations then read as
		\begin{align*}
		\dd x_2(t) &= x_1(t) \dd t + n_1\mathrm{e}^{\beta t}  \dd w(t) ,\\
		0 &=x_2(t) + n_2 \mathrm{e}^{\beta t}  v(t) ,  
		\end{align*}
		where $v$ is the Gaussian white noise process defined by the 
relation~\citep{L74}
		\[
		w(t) = \intl{0}{t}{v(s)}{s}.
		\] 
		Inserting the second equation into the first leads to a derivation of 
the white noise $v$, which is nowhere differentiable and hence, the DAE is not 
well-posed.  
Moreover, the white noise process has to be understood in a 
distributional sense \cite[Chap.~3]{L74}, so $v(t)$ is only used as a symbol. 
Since distributions are excluded in 
our solution concept, we want to avoid the explicit occurrence of the process $v$ 
in our equations. In our example, this means that $n_2=0$ must hold. But then 
the first equation implies
\begin{equation*}
 0 = x_1(t) \mathrm{d}t + n_1 \mathrm{e}^{\beta t}  \dd w(t).
\end{equation*}
Thus, if $n_1 \neq 0$, then $x_1$ would be a white noise process as 
well. So, for well-posedness, we require $\left[\begin{smallmatrix} n_1 \\ n_2
\end{smallmatrix}\right] = 0$. 
\end{example}
	
If no white noise (or its derivatives) appears in the solutions of the SDAE, 
then we call the SDAE well-posed. More precisely, an SDAE $\dd x(t) = Ax(t)\dd t+ N\mathrm{e}^{\beta t}\dd w(t)$ is well-posed, if and only if there exists an initial condition $Ex(0) = Ex_0$ such that the SDAE has a solution on $\bR_{\ge 0}$. The following lemma - which states an equivalent condition for well-posedness - also makes clear, that if an SDAE is well-posed, then it is solvable for all consistent initial conditions.
	\begin{lemma}\label{lemma:noise}
		Let $sE-A \in \mathbb{R}[s]^{n \times n}$ be regular and consider the SDAE
		\begin{align}
		\begin{aligned} 
		\dd E x(t) &= Ax(t) \dd t +  N \mathrm{e}^{\beta t} \dd w(t). \\
		%Ex(0) &= Ex_0 \label{eq:unc_sde}.
		\end{aligned} 
		\end{align}
		Then, the problem is well-posed, if and only if
		\begin{equation} \label{eq:wpcond}
		 \Pi_{\rm r} N = N,
		\end{equation}		
		where $\Pi_{\rm r} =  T^{-1}\left[\begin{smallmatrix} I_{n_1} &0 \\ 0&0 
\end{smallmatrix}\right] T$ is the spectral projector onto the right deflating 
subspace of the matrix pencil $s E-A$ and $T \in \bR^{n\times n}$ is the right 
(nonsingular) transformation matrix for transforming $sE-A$ into  
quasi-Weierstra{\ss} form~\citep{BerIT12}. 
	\end{lemma}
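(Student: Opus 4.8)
The plan is to put the pencil into quasi-Weierstraß form, decouple the SDAE into a genuine stochastic differential equation and a purely algebraic equation, and then show that the algebraic part admits an honest $L^{2,\rm loc}_{\mathcal{F}}$-solution exactly when the noise does not excite the nilpotent (infinite-eigenvalue) block. Since $sE-A$ is regular, \citet{BerIT12} provides the right transformation $T$ together with an accompanying nonsingular left transformation $W$ such that $WET = \diag(I_{n_1},\mathcal{N})$ and $WAT = \diag(J,I_{n_2})$ with $\mathcal{N}$ nilpotent. Writing $x = Tz$ with $z = (z_1\trasp,z_2\trasp)\trasp$ and $WN = (N_1\trasp,N_2\trasp)\trasp$, left-multiplication of the SDAE by $W$ splits it into the differential equation $\dd z_1 = Jz_1\,\dd t + N_1\expi^{\beta t}\,\dd w$ and the algebraic equation $\mathcal{N}\,\dd z_2 = z_2\,\dd t + N_2\expi^{\beta t}\,\dd w$. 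The first equation is a standard linear SDE whose solution is given by the explicit formula following \eqref{eq:sde}; it is solvable for every initial value and contributes no obstruction. Thus the lemma reduces to the claim that the algebraic equation has a solution $z_2 \in L^{2,\rm loc}_{\mathcal{F}}$ if and only if $N_2 = 0$, which I will finally identify with \eqref{eq:wpcond}.

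For sufficiency, if $N_2 = 0$ the algebraic equation collapses to the deterministic DAE $\mathcal{N}\,\dd z_2 = z_2\,\dd t$, and nilpotency of $\mathcal{N}$ forces $z_2 \equiv 0$ by repeated substitution ($z_2 = \mathcal{N}\dot z_2 = \mathcal{N}^2\ddot z_2 = \cdots = 0$). Combined with any solution $z_1$ of the SDE this yields a genuine solution, so the system is well-posed; since $z_2$ does not depend on the initial datum, this is also exactly why well-posedness already entails solvability for all consistent initial conditions, as announced before the lemma.

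The necessity direction is the heart of the matter and the step I expect to be hardest, because rigorously excluding distributional (white-noise) solutions requires care: a priori $z_2$ is only an $L^{2,\rm loc}_{\mathcal{F}}$-process and may not be differentiable, so the correct tool is the quadratic variation rather than formal differentiation. After a further state-space transformation I may assume $\mathcal{N}$ is a direct sum of nilpotent Jordan blocks and treat one block of size $\nu$ with $(\mathcal{N}y)_i = y_{i+1}$ for $i<\nu$ and $(\mathcal{N}y)_\nu = 0$. The equation then reads $\dd z_{2,i+1} = z_{2,i}\,\dd t + (N_2)_{i,:}\expi^{\beta t}\,\dd w$ for $i<\nu$, together with $0 = z_{2,\nu}\,\dd t + (N_2)_{\nu,:}\expi^{\beta t}\,\dd w$. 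Writing the last relation in integral form gives $\int_0^t z_{2,\nu}\,\dd s = -\int_0^t (N_2)_{\nu,:}\expi^{\beta s}\,\dd w(s)$, whose left-hand side is absolutely continuous and therefore has vanishing quadratic variation, while the Itô integral on the right has quadratic variation $\int_0^t \|(N_2)_{\nu,:}\|^2\expi^{2\beta s}\,\dd s$. Equality forces $(N_2)_{\nu,:} = 0$, hence $z_{2,\nu}$ vanishes; substituting this into the $(\nu-1)$-st equation produces $0 = z_{2,\nu-1}\,\dd t + (N_2)_{\nu-1,:}\expi^{\beta t}\,\dd w$, and the same quadratic-variation obstruction gives $(N_2)_{\nu-1,:}=0$. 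Iterating this bottom-up back-substitution exhausts the block, so $N_2=0$ on every Jordan block and hence altogether. Translating $N_2 = 0$ back into the original coordinates through the block structure of the quasi-Weierstraß form identifies it with $N$ lying in the range of the spectral projector, i.e.\ with the condition $\Pi_{\rm r}N = N$ of \eqref{eq:wpcond} (\citet{BerIT12}), which closes both implications.
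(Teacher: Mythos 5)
Your proof is correct and follows the same skeleton as the paper's: transform to quasi-Weierstra{\ss} form, observe that the slow subsystem is an ordinary linear SDE solvable for every initial value, and reduce well-posedness to the vanishing of the noise entering the nilpotent block. The one place where you genuinely diverge is the necessity direction. The paper settles it by writing down the \emph{formal} solution $x_2(t) = -\sum_{j=0}^{n_2-1} E_{22}^j N_2 (\mathrm{e}^{\beta t} v(t))^{(j)}$ in terms of derivatives of white noise and declaring that well-posedness forces $N_2=0$; this is heuristic, since it argues inside a distributional solution class that the paper has explicitly excluded. You instead stay entirely within the stated $L^{2,\rm loc}_{\mathcal{F}}$ solution concept: reducing to a single nilpotent Jordan block, the bottom row in integral form equates a finite-variation process with an It\^{o} integral, whose quadratic variation $\int_0^t \|(N_2)_{\nu,:}\|^2 \mathrm{e}^{2\beta s}\,\mathrm{d}s$ must therefore vanish, killing that row of $N_2$ and then $z_{2,\nu}$ itself; back-substitution climbs the block. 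This is a cleaner and fully rigorous version of the paper's argument, at the cost of the extra Jordan reduction and the bookkeeping of continuous modifications (you implicitly use that the right-hand side of each integral identity is continuous, so the a.e.\ vanishing of $z_{2,i}$ upgrades to identical vanishing before substituting into the next row --- worth one explicit sentence). The final identification of $N_2=0$ with $\Pi_{\rm r}N=N$ is handled at the same level of detail as in the paper, so no objection there.
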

	\begin{proof} The proof is trivial by transforming the system such that the 
pencil $sE-A$ is in quasi-Weierstra{\ss} form. This leads to the decoupled 
system 
	 \begin{align}\label{eq:QWF}
	 \begin{split}
	  \dd x_1(t) &= A_{11} x_1(t) \dd t +N_1 \mathrm{e}^{\beta t} \dd w(t), %\quad x_1(0) = x_{1,0} 
\\
	  \dd E_{22} x_2(t) &= x_2(t) \dd t + N_2 \mathrm{e}^{\beta t} \dd w(t), %\quad E_{22} x_2(0)  = E_{22} x_{2,0}
	  \end{split}
	 \end{align}
	 with $A_{11} \in \bR^{n_1 \times n_1}$ and nilpotent $E_{22} \in \bR^{n_2 
\times n_2}$, where $T^{-1} x(t) =: \left[\begin{smallmatrix} x_1(t) \\ x_2(t) 
\end{smallmatrix}\right]$. The first equation in \eqref{eq:QWF} is a standard 
SDE and is uniquely solvable for each choice of $N_1$ and each initial value. The second equation however has the (formal) solution
\begin{equation*}
  x_2(t) = - \sum_{j=0}^{n_2-1} E_{22}^j N_2 (\mathrm{e}^{\beta t} v(t))^{(j)},
\end{equation*}
where $v$ is a Gaussian white 
noise process. Thus, the SDAE is well-posed, if and only if $N_2 = 0$ (and consequently $x_2(0) = 0$ is the only consistent initial condition) which is 
equivalent to condition \eqref{eq:wpcond}.
\end{proof}
	
	As discussed in the previous subsection, one has to ensure that the white 
noise does not occur in the solution of the SDAE.
We will now characterize well-posedness of the spaces of consistent initial 
differential variables and the system space of the stochastic 
descriptor \emph{control} system $[E,A,B,N_{\beta}] \in \Sigma_{n,m,n_w}$ (where again $N_{\beta}(t):=N\mathrm{e}^{\beta t}$). Moreover, we analyze stabilizability of such 
a system. 

First we consider the space of consistent initial differential variables 
$\mathcal{V}^{\rm diff}_{[E,A,B,N]}$ and derive a condition for its well-posedness. For this purpose, we 
make use of the feedback equivalence form given in~\cite{RRV15}.

\begin{definition}
 The system $[E,A,B,N_\beta] \in \Sigma_{n,m,n_\omega}$ is \emph{well-posed}, if 
there exists an initial value $Ex(0) = Ex_0 \in L^2_{\mathcal{F}_0}(\bR^n)$ for which the system has a solution 
$(x,u)$ on $\bR_{\ge0}$. 
\end{definition}

\begin{lemma}[well-posedness of stochastic descriptor control systems] \label{lem:vdiff}
	Let $[E,A,B,N_\beta]\in \Sigma_{n,m,n_\omega}$ with $N_\beta(t) = N\mathrm{e}^{\beta t}$ be given. Then 
there exist invertible matrices $W,\,T \in \bR^{n\times n}$ and $F \in 
\bR^{m\times n}$ such that
	\begin{align}
		W\begin{bmatrix} sE-A & B & N \end{bmatrix} \begin{bmatrix}	T & 0 & 0 
\\ -FT & I_m & 0 \\ 0 & 0 & I_{n_\omega}
		\end{bmatrix} = \begin{bmatrix} sI_{n_1}-A_{11} & 0 & 0 
& B_1 & N_1 \\0&-I_{n_2} & sE_{23} & B_2 & N_2 \\ 0&0&sE_{33}-I_{n_3} & 0 & N_3	
	\end{bmatrix} \label{eq:feedback_transf}
	\end{align}
	where $E_{33} \in \bR^{n_3\times n_3}$ is nilpotent. Then the following 
statements are satisfied:
\begin{enumerate}[a)]
 \item The system $[E,A,B,N_\beta]$ with feedback equivalence form 
\eqref{eq:feedback_transf} is well-posed, if and only if $N_2 = 0$ and $N_3 = 
0$.
 \item If condition a) is satisfied, then the space of consistent initial 
differential variables $\mathcal{V}^{\rm diff}_{[E,A,B,N_\beta]}$ is well-defined and given by 
        \begin{align}
		\mathcal{V}^{\rm diff}_{[E,A,B,,N_\beta]} = T\left( \bR^{n_1+n_2} \times \ker\begin{bmatrix} E_{23} 
\\E_{33} \end{bmatrix}\right). \label{eq:vdiff}
		\end{align}
\end{enumerate}
\end{lemma}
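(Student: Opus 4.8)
The existence of invertible $W,\,T$ and a feedback $F$ bringing $\begin{bmatrix} sE-A & B & N\end{bmatrix}$ into the form \eqref{eq:feedback_transf} is precisely the feedback equivalence form of \cite{RRV15} applied to $[E,A,B]$, with the noise column transported by $W$ so that $WN = (N_1\trasp,N_2\trasp,N_3\trasp)\trasp$. Writing $x = Tz$ with $z = (z_1\trasp,z_2\trasp,z_3\trasp)\trasp$, $z_i\in\bR^{n_i}$, introducing the new input $v$ via $u = Fx+v$, and left-multiplying \eqref{eq:lqgs} by $W$, the solution relation \eqref{eq:solution} decouples into
\begin{align*}
  \dd z_1 &= A_{11}z_1\,\dd t + B_1 v\,\dd t + N_1\mathrm{e}^{\beta t}\dd w,\\
  \dd(E_{23}z_3) &= z_2\,\dd t + B_2 v\,\dd t + N_2\mathrm{e}^{\beta t}\dd w,\\
  \dd(E_{33}z_3) &= z_3\,\dd t + N_3\mathrm{e}^{\beta t}\dd w,
\end{align*}
with $E_{33}$ nilpotent. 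The first line is an ordinary It\^o SDE, solvable for every $N_1$ and every initial value, so it never obstructs well-posedness; everything happens in the last two lines.

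For part~a) I would first isolate the third line: it is exactly of the type treated in Lemma~\ref{lemma:noise}, for the regular pencil $sE_{33}-I_{n_3}$, whose right deflating subspace is trivial. Hence this block is well-posed if and only if $N_3 = 0$, and then the nilpotency of $E_{33}$ forces $z_3\equiv 0$ (the standard bootstrap $y = E_{33}\dot y \Rightarrow y = E_{33}^{n_3}y^{(n_3)} = 0$ applied to $y = E_{33}z_3$). Inserting $z_3\equiv 0$ into the second line collapses it to $0 = \int_0^t(z_2+B_2v)\,\dd s + \int_0^t N_2\mathrm{e}^{\beta s}\dd w(s)$. This equates a continuous martingale with the negative of an absolutely continuous, hence finite-variation, process; since a continuous martingale of finite variation is constant, it vanishes identically, so its quadratic variation $\int_0^t\Vert N_2\mathrm{e}^{\beta s}\Vert^2\,\dd s$ is zero, forcing $N_2 = 0$ and $z_2 = -B_2 v$. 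Conversely, if $N_2 = N_3 = 0$, then choosing $v = 0$, $z_2 = 0$, $z_3 = 0$ and solving the first line for $z_1$ yields a genuine solution, so the system is well-posed. This proves the equivalence in~a).

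For part~b), assume $N_2 = N_3 = 0$. Since $E = W^{-1}\tilde E T^{-1}$ with $\tilde E z = (z_1\trasp,(E_{23}z_3)\trasp,(E_{33}z_3)\trasp)\trasp$, the initial condition $Ex(0) = Ex_0$ is equivalent to $z_1(0) = (z_0)_1$, $E_{23}z_3(0) = E_{23}(z_0)_3$ and $E_{33}z_3(0) = E_{33}(z_0)_3$. As every solution has $z_3\equiv 0$, a solution with $Ex(0) = Ex_0$ exists if and only if $E_{23}(z_0)_3 = 0$ and $E_{33}(z_0)_3 = 0$, that is $(z_0)_3\in\ker\left[\begin{smallmatrix}E_{23}\\E_{33}\end{smallmatrix}\right]$; the component $(z_0)_1$ is free because the first line is always solvable, and $(z_0)_2$ is free because it does not enter $\tilde E z_0$ at all (the constraint is imposed on $Ex(0)$, not on $x(0)$). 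Undoing $x_0 = T z_0$ gives exactly \eqref{eq:vdiff}. Finally, since the consistency requirement reduces to the deterministic linear condition $(z_0)_3\in\ker\left[\begin{smallmatrix}E_{23}\\E_{33}\end{smallmatrix}\right]$, independent of $\omega$ and of the noise realization, and genuine white-noise-free solutions now exist, the set is a well-defined (deterministic) subspace.

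The step I expect to be the crux is the reduction of the second line: one must argue that the admissible $L^2$ input $v$ cannot absorb the diffusion $N_2\mathrm{e}^{\beta t}\dd w$, which is exactly where the uniqueness of the finite-variation/martingale splitting enters and where the stochastic statement genuinely departs from its deterministic counterpart. A secondary subtlety worth spelling out is that the constraint is placed on $Ex(0)$ rather than on $x(0)$, which is precisely what leaves the factor $\bR^{n_2}$ unconstrained in \eqref{eq:vdiff}; prescribing $x(0) = x_0$ instead would force $(z_0)_2\in\im B_2$.
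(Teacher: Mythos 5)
Your proof is correct and follows essentially the same route as the paper's: the feedback equivalence form from \cite{RRV15}, Lemma~\ref{lemma:noise} applied to the nilpotent third block to obtain $N_3=0$ and $z_3\equiv 0$, the collapse of the second block to force $N_2=0$, and the reduction of part~b) to the deterministic consistency condition on $(z_0)_3$. If anything, your martingale/finite-variation argument for the step $N_2=0$ is more rigorous than the paper's, which simply writes $x_2(t) = -B_2u(t) - N_2\mathrm{e}^{\beta t}v(t)$ with the white-noise symbol $v$ and declares well-posedness equivalent to $N_2=0$.
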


\begin{proof}
	The feedback equivalence form~\eqref{eq:feedback_transf} is an immediate 
consequence 
of~\cite{RRV15}. To show statement a), by Lemma~\ref{lemma:noise} we see that 
the equation obtained by the third row of \eqref{eq:feedback_transf} is 
well-posed, if and only if $N_3 = 0$. If the state is partitioned according to 
the block structure of \eqref{eq:feedback_transf}, i.\,e., $x(t) = 
T \left[\begin{smallmatrix} x_1(t) \\ x_2(t) \\ x_3(t) 
\end{smallmatrix}\right]$, then we directly see that $x_3 \equiv 0$. But then, 
from the second block row of \eqref{eq:feedback_transf}, we get
\begin{equation*}
 0 =  x_2(t) \mathrm{d}t + B_2 u(t) \dd t + N_2\mathrm{e}^{\beta t}  \mathrm{d}w(t) \quad 
\Rightarrow \quad x_2(t) = -B_2 u(t) - N_2\mathrm{e}^{\beta t}  v(t).
\end{equation*}
In other words, well-posedness of this equation is equivalent to $N_2 = 0$.

Statement b) now follows immediately from statement a), since the structure 
is as in the deterministic setting in \cite{RRV15} except that there is a 
stochastic noise term in the differential part of the equation which does not 
affect the set of consistent initial differential variables.
\end{proof}
\begin{remark}
 \begin{enumerate}[a)]
 \item From Lemma~\ref{lem:vdiff} we see that the space $\mathcal{V}^{\rm diff}_{[E,A,B,N_\beta]}$ does not depend on $N_\beta$ in case of a well-posed system. Thus, from now on we will only write $\Vdiff$ instead of $\mathcal{V}^{\rm diff}_{[E,A,B,N_\beta]}$ to simplify notation.
 \item Lemma~\ref{lem:vdiff} makes clear, that if the system $[E,A,B,N_\beta] \in \Sigma_{n,m,n_w}$ has a solution for one initial condition $Ex(0) = Ex_0$, then it has a solution for all $Ex(0) = Ex_0 \in \EVinit$.
 \end{enumerate}
\end{remark}

Similarly as in the deterministic setting, we will impose a stabilizability 
condition in order to guarantee finiteness of the cost functional. Here we use an adaption of the concept of behavioral stabilizability to stochastic DAE from~\cite{BR13}. 
\begin{definition} 
	  A well-posed system $[E,A,B,N_\beta] \in \Sigma_{n,m,n_w}$ with $N_\beta(t) = N\mathrm{e}^{\beta t}$ is called \emph{mean-square stabilizable (in the behavioral sense)} if
	  	for all solutions $(x,u)$ of $[E,A,B,N_\beta]$ there exists a solution $(\tilde{x},\tilde{u}) $ of $[E,A,B,N_\beta]$ with $(\tilde{x}(t,\omega), \tilde{u}(t,\omega)) = (x(t,\omega),u(t,\omega))$ for almost all $\omega \in \Omega$ and $t < 0$ and it holds that 
	  	\[\lim\limits_{t \to \infty} \bE\left[ \left\Vert \begin{pmatrix}\tilde{x}(t) \\ \tilde{u}(t) \end{pmatrix}\right\Vert^2\right] = 0.\]
	 % \end{itemize}
	\end{definition}
The following characterization holds. 
	\begin{lemma}
		Let $[E,A,B,N_\beta] \in \Sigma_{n,m,n_w}$ with $N_\beta(t) = N\mathrm{e}^{\beta t}$ be well-posed for $t \in \bR_{\ge 0}$ and $\beta < 0$. Then the system~\eqref{eq:lqg} is mean-square stabilizable, if and only if
				\begin{align}
				\rk\begin{bmatrix} \lambda E-A & B	\end{bmatrix} = n \quad \forall 
		\lambda \in \overline{\mathbb{C}^+},  \label{eq:rank_condition}
				\end{align}
			 	where $\overline{\mathbb{C}^+} := \{ \lambda \in 
		\mathbb{C} \; : \; \re(\lambda) \geq 0 \}$ is the closed right complex half-plane. 
	\end{lemma}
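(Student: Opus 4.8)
The plan is to reduce the statement to a classical Hautus-type stabilizability test for an ordinary stochastic subsystem, using the feedback equivalence form \eqref{eq:feedback_transf} of Lemma~\ref{lem:vdiff}. First I would observe that the rank condition \eqref{eq:rank_condition} is invariant under this transformation: since
\[
\begin{bmatrix} \lambda E - (A+BF) & B \end{bmatrix} = \begin{bmatrix} \lambda E - A & B \end{bmatrix}\begin{bmatrix} I_n & 0 \\ -F & I_m \end{bmatrix},
\]
and pre- and post-multiplication by invertible matrices preserves the rank, I may assume $[E,A,B,N_\beta]$ is already in the form \eqref{eq:feedback_transf}, where well-posedness forces $N_2 = 0$ and $N_3 = 0$ by Lemma~\ref{lem:vdiff}a). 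A direct block computation then shows that $\rk\begin{bmatrix} \lambda E - A & B\end{bmatrix} = n$ for all $\lambda \in \overline{\mathbb{C}^+}$ holds if and only if $\rk\begin{bmatrix} \lambda I_{n_1} - A_{11} & B_1\end{bmatrix} = n_1$ for all such $\lambda$: the block $\lambda E_{33} - I_{n_3}$ is invertible for every $\lambda$ because $E_{33}$ is nilpotent, and the block $-I_{n_2}$ always has full row rank, so only the first block row can cause a rank drop in $\overline{\mathbb{C}^+}$. Thus \eqref{eq:rank_condition} is equivalent to classical stabilizability of the pair $(A_{11},B_1)$.

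Next I would reduce mean-square stabilizability of the full SDAE to that of the ODE part. In the transformed coordinates $T^{-1}x = (x_1,x_2,x_3)$ with new input $\hat u = u - Fx$, well-posedness gives $x_3 \equiv 0$, and the second block row of \eqref{eq:feedback_transf} makes $x_2$ an algebraic function of $\hat u$, so every solution is determined by $x_1$ and $\hat u$, where $x_1$ solves the stochastic ODE $\dd x_1 = A_{11}x_1\,\dd t + B_1\hat u\,\dd t + N_1\mathrm{e}^{\beta t}\,\dd w$. Hence mean-square decay of $(\tilde x,\tilde u)$ is equivalent to that of $(x_1,\hat u)$. For the sufficiency direction, if $(A_{11},B_1)$ is stabilizable I would pick $K$ with $A_{11}+B_1K$ Hurwitz and, for the given solution, set $\hat u = K x_1$ on $\bR_{\ge 0}$ while keeping $\hat u = u$ on $(-\infty,0)$ so that the past is matched. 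Writing $\bE\big[\|x_1(t)\|^2\big] = \|\bE[x_1(t)]\|^2 + \tr\,\mathrm{Cov}(x_1(t))$, the mean decays exponentially, while the covariance obeys the differential Lyapunov equation
\[
\frac{\dd}{\dd t}\mathrm{Cov}(x_1(t)) = (A_{11}+B_1K)\,\mathrm{Cov}(x_1(t)) + \mathrm{Cov}(x_1(t))\,(A_{11}+B_1K)\trasp + \mathrm{e}^{2\beta t}N_1N_1\trasp,
\]
whose solution tends to zero because the homogeneous flow is exponentially stable and the inhomogeneity $\mathrm{e}^{2\beta t}N_1N_1\trasp$ vanishes as $t \to \infty$; this is exactly where $\beta < 0$ enters. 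Transforming back yields a matched solution with $\lim_{t\to\infty}\bE\big[\|(\tilde x(t),\tilde u(t))\|^2\big] = 0$.

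For necessity I would argue by contraposition. If \eqref{eq:rank_condition} fails, then by the equivalence above $(A_{11},B_1)$ is not stabilizable, so there exist $\lambda_0 \in \overline{\mathbb{C}^+}$ and $0 \neq v \in \mathbb{C}^{n_1}$ with $v\st A_{11} = \lambda_0 v\st$ and $v\st B_1 = 0$. I would then select a solution $(x,u)$ whose differential variable satisfies $v\st x_1(0) \neq 0$, which is admissible since $x_1$ is a free differential variable. For any matched solution $(\tilde x,\tilde u)$, continuity at $t=0$ gives $v\st\tilde x_1(0) = v\st x_1(0)\neq 0$, and $m(t) := \bE[v\st\tilde x_1(t)]$ satisfies $\dot m = \lambda_0 m$ irrespective of the input, because $v\st B_1 = 0$ kills the control term. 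Hence $|m(t)| = \mathrm{e}^{\re(\lambda_0)t}|m(0)|$ does not tend to zero, whereas $\bE\big[\|\tilde x_1(t)\|^2\big]\to 0$ would force $m(t)\to 0$; this contradiction shows the system is not mean-square stabilizable.

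The hard part will be the careful handling of the behavioral matching requirement: I must verify that replacing the input by the stabilizing feedback at $t=0$ while retaining the original trajectory on $(-\infty,0)$ genuinely produces an admissible solution of the SDAE. Since $x_1$ is the only noise-driven component and solves a standard It\^o SDE, it is mean-square continuous at $t=0$, so $x_1(0)$ is well-defined and the stabilizing dynamics can be initialized there, while the algebraic variables $x_2$ and $x_3 \equiv 0$ adjust consistently and instantaneously. The remaining quantitative point is the decay estimate for the Lyapunov term, i.e.\ showing that
\[
\int_0^t \mathrm{e}^{(A_{11}+B_1K)(t-s)} N_1 N_1\trasp \,\mathrm{e}^{(A_{11}+B_1K)\trasp(t-s)}\,\mathrm{e}^{2\beta s}\,\dd s \to 0,
\]
which is a routine splitting using an exponential bound $\big\Vert \mathrm{e}^{(A_{11}+B_1K)\tau}\big\Vert \le C\mathrm{e}^{-\alpha\tau}$ together with $\beta < 0$.
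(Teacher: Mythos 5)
Your proposal is correct, and the reduction via the feedback equivalence form \eqref{eq:feedback_transf}, the block rank computation, and the sufficiency argument (feedback $A_{11}+B_1K$ Hurwitz, second moment governed by a differential Lyapunov equation whose inhomogeneity $\mathrm{e}^{2\beta t}N_1N_1\trasp$ vanishes because $\beta<0$) coincide with the paper's proof almost step for step. The only genuine divergence is in the necessity direction: the paper passes to a Kalman decomposition of $(A_{11},B_1)$, isolates an uncontrollable subsystem with spectrum meeting $\overline{\C^+}$, and observes that this input-independent SDE is not mean-square stable, whereas you use a Hautus eigenvector $v$ with $v^*A_{11}=\lambda_0 v^*$, $v^*B_1=0$ and track the scalar mean $m(t)=\bE[v^*\tilde x_1(t)]$, which obeys $\dot m=\lambda_0 m$ regardless of the input, so $|m(t)|$ does not decay while $|m(t)|^2\le \|v\|^2\,\bE\big[\|\tilde x_1(t)\|^2\big]$ would force it to. Your variant is slightly more elementary (no decomposition needs to be constructed, and the noise drops out immediately upon taking expectations of the It\^o integral) and it handles the boundary case $\re(\lambda_0)=0$ transparently; the paper's decomposition argument, on the other hand, exhibits the whole uncontrollable-unstable subsystem at once. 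One small point to make explicit in your write-up: choose the initial condition (or at least its mean) deterministic with $v^*x_1(0)\neq 0$, so that $m(0)\neq 0$; a pointwise nonzero random $v^*x_1(0)$ need not have nonzero expectation. You also treat the behavioral matching at $t=0$ more carefully than the paper does, which is a welcome addition rather than a deviation.
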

\begin{proof}
 
By using the transformation to feedback equivalence form of $[E,A,B,N_\beta]$ and due to the well-posedness of the system we can find invertible matrices $W,\,T \in \R^{n \times n}$ and $F \in \R^{m \times n}$ such that we obtain the transformed system \eqref{eq:feedback_transf} with $N_2 = 0$ and $N_3 = 0$.
Then $(x,u)$ is a solution of $[E,A,B,N_\beta]$, if and only if $\left( \left( \begin{smallmatrix} \tilde{x}_1 \\ \tilde{x}_2 \\ \tilde{x}_3 \end{smallmatrix}\right), \tilde{u}\right) := \left(T^{-1}x , Fx+u\right)$ solves the transformed system; and we have  
\begin{align*}
 \rk \begin{bmatrix} \lambda E -A & B \end{bmatrix} = \rk \begin{bmatrix} \lambda I_{n_1} -A_{11} & B_1 \end{bmatrix} + n_2 + n_3  \quad \forall\,\lambda \in \bC.
\end{align*}

Let \eqref{eq:rank_condition} be satisfied. Then $\rk \begin{bmatrix} \lambda I_{n_1} -A_{11} & B_1 \end{bmatrix} = n_1$ for all $\lambda \in \overline{\bC^+}$. So we can find an $F_1 \in \R^{n_1 \times m}$ such that $\sigma(A_{11}+B_1F_1) \subset \bC^-$. With $\hat{A}_{11} := A_{11} + B_1F_1$, consider the SDE
\begin{equation*}
 \dd \hat{x}_1(t) = \hat{A}_{11} \hat{x}_1(t) \dd t + N_1\mathrm{e}^{\beta t}  \dd w(t)
\end{equation*}
which has the solution
\begin{equation*}
   \hat{x}_1(t) = \expi^{\hat{A}_{11}t} \hat{x}_1(0) + \intl{0}{t}{\expi^{\hat{A}_{11} 
	(t-s)}N_1e^{\beta s} }{w(s)}.
\end{equation*}
In particular, it holds that
	\begin{align*}
	\bE\big[\hat{x}_{1}(t) \hat{x}_{1}(t)\trasp\big] &= \expi^{\hat{A}_{11} t} 
	\bE\big[\hat{x}_{1}(0) \hat{x}_{1}(0)\trasp\big] 
	\expi^{\hat{A}_{11}\trasp  t} + \intl{0}{t} {\expi^{2\beta s}\expi^{\hat{A}_{11}(t-s)}
	N_{1}N_{1}\trasp \expi^{\hat{A}_{11}\trasp(t-s)}}{s} \\
			&= \expi^{\hat{A}_{11} t} \bE\big[\hat{x}_{1}(0)\hat{x}_{1}(0)\trasp\big] \expi^{\hat{A}_{11}\trasp 
	t}+ X(t),  
	\end{align*}
	where $X(\cdot)$ solves the differential Lyapunov equation
	\[
			\dot{X}(t)  = \hat{A}_{11} X(t) + X(t) \hat{A}_{11} \trasp + \expi^{2\beta 
	t}N_{1}N_{1}\trasp, \quad X(0) = 0,
	\]
	see~\cite{BWSV16}. In particular, $X(t) \to 0$ for $t \to \infty$, if $\sigma\big(\hat{A}_{11} \big) \subset \C^-$. Moreover, as $\bE\big[\hat{x}_{1}(t)\trasp \hat{x}_{1}(t)\big] = \tr\left( \bE\big[\hat{x}_{1}(t)\hat{x}_{1}(t)\trasp\big] \right)$ it follows from the positive semi-definiteness of $\bE\big[\hat{x}_{1}(t) \hat{x}_{1}(t)\trasp]$ that $\bE\big[\hat{x}_{1}(t)\trasp \hat{x}_{1}(t)]  = 0$, if and only if $\bE\big[\hat{x}_{1}(t) \hat{x}_{1}(t)\trasp] = 0$. 
	As $\sigma(\hat{A}_{11}) \subset \C^-$, with the feedback $\tilde{u}(t) := F_1\tilde{x}_1(t)$ we obtain $\lim_{t \to \infty} \bE\left[ \left\| \left(\begin{smallmatrix}\tilde{x}_1(t) \\ \tilde{u}(t)\end{smallmatrix}\right)\right\|^2 \right] = 0$ and consequently, $\lim_{t \to \infty} \bE\big[ \big\| \tilde{x}_2(t)\big\|^2 \big] = 0$ as well as  $\lim_{t \to \infty} \bE\big[ \big\|\tilde{x}_3(t) \big\|^2 \big] = 0$. So the system $[E,A,B,N_\beta]$ is mean-square stabilizable.
	
	Conversely assume that $ \rk \begin{bmatrix} \lambda E -A & B \end{bmatrix}< n$ for some $\lambda \in \overline{\bC^+}$. Let $V \in \R^{n_1 \times n_1}$ be an orthogonal matrix leading to the Kalman decomposition
	\begin{equation*}
	  V\trasp A_{11} V = \begin{bmatrix} A_{11}^{(11)} &  A_{11}^{(12)} \\ 0 &  A_{11}^{(22)}  \end{bmatrix}, \quad V\trasp B_1 = \begin{bmatrix} B_1^{(1)} \\ 0 \end{bmatrix}, \quad V\trasp N_1 = \begin{bmatrix} N_1^{(1)} \\ N_1^{(2)} \end{bmatrix}
	\end{equation*}
	with $\sigma\big( A_{11}^{(22)} \big) \cap \overline{\bC^+} \neq \emptyset$. But then by the considerations above, the input-independent SDE
	\begin{equation*}
	\dd \tilde{x}_{1}^{(2)}(t) = A_{11}^{(22)} \tilde{x}_{1}^{(2)}(t) \dd t + N_1^{(2)}\mathrm{e}^{\beta t}  \dd w(t)
	\end{equation*}
	is not mean-square stable, so $[E,A,B,N_\beta]$ is not mean-square stabilizable.
	\end{proof}  

	A crucial role in the study of descriptor systems plays the~\emph{system 
space}.
	\begin{definition}
		The \emph{system space} of $[E,A,B,N_\beta]$ is the smallest subspace 
$\mathcal{V}^{\rm sys}_{[E,A,B,N_\beta]}\subseteq  \bR^{n+m}$ such that 
	\begin{multline*}
		\forall \text{ solutions } (x,u) \text{ of } [E,A,B,N_\beta]: \begin{pmatrix} x(t,\omega) \\ 
u(t,\omega)\end{pmatrix} \in \mathcal{V}^{\rm sys}_{[E,A,B,N_\beta]} \\ \text{ for all } t \in \bR \text{ and all } \omega \in \Omega \text{ for which } (x(\cdot,\omega),u(\cdot,\omega)) \text{ satisfies } \eqref{eq:solution}.
	\end{multline*}
	\end{definition}
	Since the noise only affects the dynamic part of our SDAE, it is easily 
verified that the space $\mathcal{V}^{\rm sys}_{[E,A,B,N_\beta]}$ is equal to system space for deterministic systems as in \citet{RRV15} and hence, it does not depend on $N_\beta$. Therefore, for a more concise notation, we will only write $\Vsys$ instead of $\mathcal{V}^{\rm sys}_{[E,A,B,N_\beta]}$.
	
%%%%%%%%%%%%%%%%%%%%%%%%%%%%%%%%%%%%%%%%%%%%%%%%%%%%%%%%%%%%%%%%%%%%%%%%%%%%%%%%
%%%%%%%%%%%%%	
	\section{The optimal control problem}
	In this section we return to the analysis of the optimal control problem which we state once completely. 
	%First we pose the following assumption that is valid throughout this section.
%	\begin{assumption}\label{asmp}
%	The system $[E,A,B,N,w] \in \Sigma_{n,m,n_w}$ is well-posed, where $w(\cdot)$ is a Wiener process with $\bE[w(t)] = 0$ and $\bE\big[w(t)w(t)\trasp\big]=\mathrm{e}^{2\beta t}I_{n_w}$ for $t \in \bR_{\ge 0}$ and $\beta < 0$.
%	\end{assumption}
%	Under Assumption~\ref{asmp} 
We consider the optimal control problem
	\begin{align} \label{eq:OCP}
	 \begin{split}
      \text{minimize } & J(x,u,\bR_{\ge 0}) = \bE\left[ \int_0^\infty \begin{pmatrix} x(t) \\ u(t) \end{pmatrix}\trasp \begin{bmatrix} Q & S \\ S\trasp & R \end{bmatrix} \begin{pmatrix} x(t) \\ u(t) \end{pmatrix} \dd t \right]	 \\
      \text{subject to } & \dd E x(t) = (Ax(t) + Bu(t)) \dd t + N \mathrm{e}^{\beta t} \dd w(t), \\ & Ex(0) = Ex_0, \quad \lim_{t \to \infty} \bE\left[ \left\| Ex(t) \right\|^2\right] = 0. 
    %   & \bE[w(t)] = 0 ,\quad \bE\big[w(t)w(t)\trasp\big] = {\rm e}^{2\beta t} I_{n_w} \quad \text{for all } t \in \R_{\ge 0}.
      \end{split}
	\end{align}
	The aim of this section is to derive conditions for feasibility and 
regularity of the corresponding control problem. Recall that we say that $(x,u)$ is a solution of $[E,A,B,N$ on $[0,\te]$ if $(x,u) \in L^{2,\rm loc}_{\mathcal{F}}([0,\te], \bR^n) \times L^{2,\rm loc}_{\mathcal{F}}([0,\te], \bR^m )$ and it solves the stochastic DAE~\eqref{eq:lqg}. The following definition has been adapted from \cite{RV18}.
	
	\begin{definition}[Feasibility, regularity, optimal control]
		Let $[E,A,B,N_\beta] \in \Sigma_{n,m,n_w}$ with $N_\beta(t)=N\mathrm{e}^{\beta t}$ and $\beta < 0$ be well-posed and let $Q = Q\trasp \in 
\bR^{n\times 
n}$, $S \in \bR^{n\times m}$, and $R = R\trasp \in \bR^{m\times m}$ be given.
		\begin{enumerate}[a)]
			\item The optimal control problem~\eqref{eq:OCP} is 
called \emph{feasible} if for all $x_0 \in \Vinit$ it holds that
			\[
			-\infty < W_+(Ex_0) < \infty .
			\]
			\item A solution $(x\st, u\st)$ of $[E,A,B,N_\beta] $ on $\bR_{\geq 0}$ 
with initial condition $Ex(0) = Ex_0 \in \EVinit$ and $\lim_{t \to \infty}\bE\big[\Vert Ex(t)\Vert^2\big] = 0$ 
is called an \emph{optimal control} for~\eqref{eq:OCP}, if
			\[
			W_+(Ex_0) = J(x\st,u\st,\bR_{\ge 0}) .
			\]  
			\item  The optimal control problem~\eqref{eq:OCP} is 
called \emph{regular}, if for all $x_0 \in \Vinit$, there exists a unique 
optimal control. 
		\end{enumerate} 
	\end{definition}

%%%%%%%%%%%%%%%%%%%%%%%%%%%%%%%%%%%%%%%%%%%%%%%%%%%%%%%%%%%%%%%%%%%%%%%%%%%%%%%%
%%%%%%%%
	
	\subsection{Feasibility of the problem} \label{sec:feas}
	The following definition is an extension of dissipativity of stochastic 
differential equations. Our definition is a slight modification from the definitions in~\citet{MP09,ZJ10}.  
	\begin{definition}
		Let $[E,A,B,N_\beta] \in \Sigma_{n,m,n_w}$ with $N_\beta(t)=N\mathrm{e}^{\beta t}$ and $\beta < 0$  be well-posed and let $Q = Q\trasp \in \bR^{n\times 
n}$, $S \in \bR^{n\times m}$, and $R = R\trasp \in \bR^{m\times m}$ be given. Then the system $[E,A,B,N_\beta]$ with the cost functional in~\eqref{eq:OCP} is said to 
be~\emph{dissipative} on $\R_{\ge 0}$ if 
there exist a twice continuously differentiable \emph{storage function} 
$V:E \Vdiff \to \bR$ such that the~\emph{integral dissipation 
inequality}
		\begin{align}
		\bE[V(Ex(t_1))] + \bE\left[\intl{t_1}{t_2}{\mu_N(Ex(t))}{t}\right] \leq  \bE[V(Ex(t_2))] + 
J(x,u,[t_1,t_2])    \label{eq:dissipation}
		\end{align}
		holds for any times $0 \leq t_1 \leq t_2 < \infty$ and any solution $(x,u)$ of $[E,A,B,N_\beta]$ on $[t_1,t_2]$ with
		\begin{equation*}   
		 \mu_N: E \Vdiff \to \bR, \quad Ex_0 \mapsto \frac{1}{2}e^{2\beta t} w(t)\trasp N\trasp \nabla^2V(Ex_0) N w(t).
		\end{equation*}\label{def:dissipative}
	\end{definition}
	Note that the above definition boils down to the well-known dissipation inequality in the deterministic setting with $N = 0$ and deterministic initial condition.

	From now on we also need the following notation: For two symmetric matrices $X,\,Y \in \R^{k \times k}$ and a subspace $\mathcal{V} \subseteq \R^k$ we write $$X =_{\mathcal{V}} (\leq_{\mathcal{V}}, \geq_{\mathcal{V}}) Y \iff v\trasp Xv = 
(\le, \ge) v\trasp Yv \text{ for all } v \in \mathcal{V}. $$  %Similarly it holds for $\leq_{\Vsys}, \geq_{\Vsys}$
	
    One can now check that any quadratic function $V : E\Vdiff \to \R$ with $V(Ex_0) = (Ex_0)\trasp P Ex_0$ for which $P \in \R^{n \times n}$ is  a solution of the \emph{Kalman-Yakubovich-Popov (KYP) inequality}
	\begin{align} 
 			\begin{bmatrix}
 			A\trasp P E + E\trasp P A + Q & E\trasp PB+S\\B\trasp PE+S\trasp & R
 			\end{bmatrix} \geq_{\Vsys} 0, \quad P = P\trasp  \label{eq:kyp}
 			\end{align}
	is a storage function in our case. This can be seen together with It\^{o}'s formula. We get
 	\begin{align*}
 	 & \bE[V(Ex(t_2))] - \bE[V(Ex(t_1))] \\&= \bE\left[ \int_{t_1}^{t_2} \dd V(Ex(t)) \right] + \bE\left[\int_{t_1}^{t_2}  \frac{1}{2}e^{2\beta t} w(t)\trasp N\trasp \nabla^2 V(Ex(t)) N w(t)\dd t \right]\\ 
 	 &= \bE\left[ \int_{t_1}^{t_2} (\nabla V(Ex(t)))\trasp \dd Ex(t) \right] + \int_{t_1}^{t_2}  \frac{1}{2}\mathrm{e}^{2\beta t}\tr \big(N\trasp \nabla^2 V(Ex(t)) N\big) \dd t \\ 
 	 &= \bE\left[ \int_{t_1}^{t_2} 2x(t)\trasp E\trasp P (Ax(t) + Bu(t)) \dd t \right] + \int_{t_1}^{t_2} \mathrm{e}^{2\beta t}\tr \big(N\trasp P N\big) \dd t  \\
 	 &\ge -\bE\left[ \int_{t_1}^{t_2} \begin{pmatrix}
 		x(t) \\ u(t) \end{pmatrix}\trasp \begin{bmatrix} Q & S \\ S\trasp& R \end{bmatrix} 
 		\begin{pmatrix}
 		x(t) \\ u(t) \end{pmatrix} \dd t \right] + \bE\left[\int_{t_1}^{t_2} \mu_N(Ex(t))\dd t\right].
    \end{align*}
In the next theorem we will show analogously to the deterministic case that feasibility of the optimal control problem implies solvability of the KYP inequality.
\begin{theorem}
		Let $[E,A,B,N_\beta] \in \Sigma_{n,m,n_w}$ with $N_\beta(t)=N\mathrm{e}^{\beta t}$ and $\beta < 0$  be well-posed and let $Q = Q\trasp \in \bR^{n\times 
		n}$, $S \in \bR^{n\times m}$, and $R = R\trasp \in \bR^{m\times m}$ be given. 
		If the optimal control problem \eqref{eq:OCP} is feasible, then the system $[E,A,B,N_\beta]$ is mean-square stabilizable and the KYP inequality \eqref{eq:kyp} is feasible, i.\,e., there exists at least one matrix $P \in \bR^{n \times n}$ satisfying~\eqref{eq:kyp}.
		%If the stochastic system $[E,A,B,N,w]$ is stabilizable in the behavioral sense and if the KYP inequality \eqref{eq:kyp} has at least one solution, then it holds that $W_+(Ex_0) = W_+^{\rm d}(Ex_0)+ W_+^{\rm s}(N)$ with $W_+^{\rm d}(Ex_0) = \bE\left[ V_+^{\rm d}(Ex_0) \right]$ for each random initial condition $x_0 \in L^2\big(\Omega,\Vdiff)$ that is independent of $w(0)$, where $V_+^{\rm d} : E\Vdiff \to \bR$ and $W_+^{\rm s} : \R^{n \times n_w} \to \bR$ are both quadratic and $V_+^{\rm d}$ is a storage function. 
		\label{thm:wopt}
\end{theorem}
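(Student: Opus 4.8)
The plan is to establish the two conclusions separately: first mean-square stabilizability, and then, using it, the solvability of the Kalman--Yakubovich--Popov inequality, which I would deduce by reducing to the already-available deterministic result of \cite{RV18}. The reduction is legitimate because neither the KYP inequality \eqref{eq:kyp} nor the system space $\Vsys$ depends on the noise matrix $N_\beta$.

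\emph{Stabilizability.} I would argue by contraposition. Suppose the rank condition \eqref{eq:rank_condition} fails, i.e.\ $\rk\begin{bmatrix}\lambda E-A & B\end{bmatrix} < n$ for some $\lambda \in \overline{\bC^+}$. Transforming to the feedback equivalence form \eqref{eq:feedback_transf} and performing the Kalman decomposition of $(A_{11},B_1)$ exactly as in the proof of the mean-square stabilizability characterization, one isolates an input-\emph{independent} subsystem $\dd \tilde{x}_1^{(2)}(t) = A_{11}^{(22)}\tilde{x}_1^{(2)}(t)\,\dd t + N_1^{(2)}\mathrm{e}^{\beta t}\,\dd w(t)$ with $\sigma\big(A_{11}^{(22)}\big)\cap\overline{\bC^+}\neq\emptyset$. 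Since by \eqref{eq:vdiff} the entire $x_1$-block is free in $\Vdiff$, I can pick a deterministic consistent initial condition whose component along an unstable eigenvector of $A_{11}^{(22)}$ is nonzero. As this block is unaffected by $u$ and $E$ acts as the identity on it, $\bE\big[\|Ex(t)\|^2\big]$ cannot tend to $0$ under any control, so the admissible set is empty and $W_+(Ex_0)=+\infty$, contradicting feasibility. Hence \eqref{eq:rank_condition} holds.

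\emph{Feasibility of the KYP inequality.} I would show that stochastic feasibility implies feasibility of the associated \emph{deterministic} problem for $[E,A,B]$ (the case $N=0$ with deterministic data). The key tool is the mean--fluctuation splitting: for any solution $(x,u)$ the mean $(\bE[x],\bE[u])$ solves the deterministic descriptor system, and since $\|E\bE[x(t)]\|^2 \le \bE\big[\|Ex(t)\|^2\big]$, the terminal constraint forces the mean to be deterministically admissible. Fix a deterministic consistent $x_0$. The upper bound $W^{\mathrm{det}}_+(Ex_0)<\infty$ follows from a stabilizing feedback (available by the first part). For the lower bound I argue by contradiction: if $W^{\mathrm{det}}_+(Ex_0)=-\infty$, take deterministic admissible pairs $(x_k,u_k)$ with $J_{\mathrm{det}}(x_k,u_k)\to-\infty$ and feed them into the stochastic problem via the tracking control $u_k + K(x-x_k)$, where $K$ is a stabilizing gain. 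The error $e:=x-x_k$ satisfies a noise-driven closed-loop equation with $e(0)=0$ and $\bE[e]\equiv 0$, whose covariance decays exponentially because $K$ stabilizes the dynamic part and $\beta<0$; consequently $J(x,u)=J_{\mathrm{det}}(x_k,u_k)+C_K$ with a fixed finite constant $C_K$ determined by the error dynamics alone and hence independent of $k$, and the control is stochastically admissible. Letting $k\to\infty$ yields $W_+(Ex_0)=-\infty$, contradicting stochastic feasibility. Thus the deterministic problem is feasible for every consistent $Ex_0$, the deterministic theorem of \cite{RV18} produces a symmetric $P$ solving \eqref{eq:kyp}, and by the $N_\beta$-independence of \eqref{eq:kyp} and $\Vsys$ the same $P$ serves for $[E,A,B,N_\beta]$.

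\emph{Main obstacle.} The delicate step is the lower bound: I must guarantee that transporting the deterministic near-optimal controls into the stochastic problem perturbs the cost only by a bounded amount. This is precisely where mean-square stabilizability is indispensable, as it furnishes the feedback $K$ under which the covariance of the error dynamics is integrable (using $\beta<0$), so the additive fluctuation cost $C_K$ is finite and, crucially, independent of $k$, allowing the divergence of $J_{\mathrm{det}}$ to propagate to $J$. Checking that the tracking control preserves well-posedness of the closed-loop descriptor system and integrability of the covariance is the only genuinely technical point; it is handled in the coordinates of \eqref{eq:feedback_transf}, where the noise enters only the decaying dynamic block.
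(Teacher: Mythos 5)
Your proof is correct and ends where the paper's does --- in an appeal to the deterministic result \citet[Theorem~3.11]{RV18}, justified by the fact that neither \eqref{eq:kyp} nor $\Vsys$ depends on $N_\beta$ --- but the reduction to the deterministic problem is carried out by a genuinely different mechanism. The paper decomposes an \emph{arbitrary} admissible pair into its mean and its zero-mean fluctuation, $z=\hat z+\tilde z$ with $\hat z=\bE[z]$, notes that the cross term in the cost vanishes because $\bE[\tilde z]\equiv 0$, and thereby obtains the exact additive splitting $W_+(Ex_0)=V_+^{\rm d}(Ex_0)+W_+^{\rm s}(Ex_0)$; feasibility then forces $-\infty<V_+^{\rm d}(Ex_0)<\infty$ in one stroke, and mean-square stabilizability falls out because the mean subproblem and the fluctuation subproblem are simultaneously stabilizable or not. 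You go in the opposite direction: you \emph{construct} stochastic admissible pairs from deterministic ones via the tracking control $u_k+K(x-x_k)$, and your identity $J(x,u)=J_{\rm det}(x_k,u_k)+C_K$ rests on exactly the same orthogonality (the cross term dies because $\bE[e]\equiv 0$ for the error process started at zero), applied to the particular fluctuation you build rather than to that of an arbitrary solution. Both routes are sound. What the paper's buys is brevity and the complete avoidance of the technical point you rightly single out: it never forms a closed loop, so well-posedness under feedback and integrability of the error covariance never arise. What yours buys is constructiveness and an explicit display of where $\beta<0$ and the Hurwitz gain enter (exponential decay of the error covariance, hence $|C_K|<\infty$ even for an indefinite weight matrix); the one point that must be nailed down is that the gain is chosen in the coordinates of \eqref{eq:feedback_transf} as $\tilde u=F_1x_1$, so the closed-loop SDAE keeps the noise confined to the dynamic block --- which you state. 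Your stabilizability argument by contraposition through the rank test \eqref{eq:rank_condition} and the Kalman decomposition is likewise fine; it is essentially the converse direction of the paper's stabilizability lemma made explicit, where the paper instead argues directly from non-emptiness of the admissible set, and the Jensen bound $\|\bE[Ex(t)]\|^2\le\bE\big[\|Ex(t)\|^2\big]$ you use covers the boundary eigenvalues as well as the strictly unstable ones.
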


\begin{proof}
	Let $z := \left(\begin{smallmatrix} x \\ u\end{smallmatrix}\right) \in L^{2,\rm loc}_{\mathcal{F}}\big( \bR_{\ge 0},\Vsys\big)$ and $\tilde{z}(t) := z(t) - \bE[z(t)] := z(t)-\hat{z}(t)$. Then $z(t) = \tilde{z}(t) + \hat{z}(t)$ and $\bE[\tilde{z}(t)] = 0$ for all $t \ge 0$. In particular, for $M := \left[ \begin{smallmatrix} Q & S \\ S\trasp & R \end{smallmatrix}\right] \in \bR^{(n+m)\times (n+m)}$ we have
	\begin{align*}
	\bE[z(t)\trasp M z(t)] &= \bE\big[\big(z(t)-\hat{z}(t) + \hat{z}(t)\big)\trasp M \big(z(t)-\hat{z}(t)+ \hat{z}(t)\big)\big] \\
	&= \bE\big[\big(z(t)-\hat{z}(t)\big)\trasp M \big(z(t)-\hat{z}(t)\big)\big] + \bE\big[\hat{z}(t)\trasp M \hat{z}(t)\big] \\ & \qquad + 2 \bE\big[\hat{z}(t)\trasp M \big(z(t)-\hat{z}(t)\big)\big]\\
	&=\bE\big[\tilde{z}(t)\trasp M \tilde{z}(t)\big] + \hat{z}(t)\trasp M \hat{z}(t) + 2 \hat{z}(t)\trasp M \bE\big[\tilde{z}(t)\big] \\
	&= \bE\big[\tilde{z}(t)\trasp M \tilde{z}(t)\big] + \hat{z}(t)\trasp M \hat{z}(t). 
	\end{align*}
	Thus with $\hat{z}(\cdot)=:\left(\begin{smallmatrix} \hat{x}(\cdot) \\ \hat{u}(\cdot) \end{smallmatrix} \right)$ and $\tilde{z}(\cdot)=:\left(\begin{smallmatrix} \tilde{x}(\cdot) \\ \tilde{u}(\cdot) \end{smallmatrix} \right)$ we can write $$J(x,u,\bR_{\ge 0}) = J^{\rm d}\big(\hat{x},\hat{u},\bR_{\ge 0}\big) + J^{\rm s}\big(\tilde{x},\tilde{u},\bR_{\ge 0}\big),$$ where
	\begin{align*}
	J^{\rm d}(\hat{x},\hat{u},\bR_{\ge 0}) &=\int_{0}^{\infty}\begin{pmatrix}\hat{x}(t) \\ \hat{u}(t) \end{pmatrix}\trasp \begin{bmatrix} Q & S\\S\trasp & R \end{bmatrix} \begin{pmatrix}\hat{x}(t) \\ \hat{u}(t) \end{pmatrix} \dd t \\% \label{eq:jud}\\
	J^{\rm s}(\tilde{x},\tilde{u},\bR_{\ge 0})  &= \bE\left[ \int_{0}^{\infty}\begin{pmatrix}\tilde{x}(t) \\ \tilde{u}(t) \end{pmatrix}\trasp \begin{bmatrix} Q & S\\S\trasp & R \end{bmatrix} \begin{pmatrix}\tilde{x}(t) \\ \tilde{u}(t) \end{pmatrix} \dd t \right] % \label{eq:jus}
	\end{align*}
	subject to 
	\begin{align}
	\frac{\dd}{\dd t}E\hat{x}(t) &= A\hat{x}(t) + B\hat{u}(t), \quad E\hat{x}(0) = \bE[Ex_0], \label{eq:ode} \\ 
	\dd E\tilde{x}(t) & = (A\tilde{x}(t) + B\tilde{u}(t)) \dd t + N\mathrm{e}^{\beta t}  \dd w(t) , \quad E\tilde{x}(0) = Ex_0 - \bE[Ex_0]. \label{eq:sden}
	\end{align}	
	%\todo[inline]{Jetzt gibt es schon wieder ein Problem: \\
	%Wir wollen
	%$$W_+(Ex_0) = \bE[(Ex_0)\trasp P Ex_0] - \frac{1}{2\beta}\tr(NN\trasp P).$$
	%Was nach der Herleitung am Ende aber dasteht ist
	%$$W_+(Ex_0) = \bE[Ex_0]\trasp P \bE[Ex_0] - \frac{1}{2\beta}\tr(NN\trasp P).$$
	%Und das ist ja nur dasselbe, fall die Varianz von $Ex_0$ 0 ist, wenn ich das richtig verstanden habe. Hast du eine Idee? }
 Consider the value functions $V_+^{\rm d}:E\Vdiff \to \bR \cup \{-\infty,\infty\}$ and $W_+^{\rm s} : \EVinit \to \bR \cup \{-\infty, \infty\}$ with 
	\begin{align*}
	{V}_+^{\rm d}(Ex_0) &= \inf\left\{ J^{\rm d}(\hat{x},\hat{u},\bR_{\ge 0}): (\hat{x},\hat{u})  \text{ solves } \eqref{eq:ode} \text{ with } \right. \\ & \qquad\qquad\qquad\left. E \hat{x}(0) = \bE[Ex_0] \text{ and } \lim_{t \to \infty} E\hat{x}(t) = 0 \right\}, \\ % \label{eq:wd}
	{W}_+^{\rm s}(Ex_0) &= \inf\Big\{ J^{\rm s}(\tilde{x},\tilde{u},\bR_{\ge 0}): (\tilde{x},\tilde{u})  \text{ solves } \eqref{eq:sden} \text{ with } \\ & \qquad\qquad\qquad \left. E \tilde{x}(0) = Ex_0 - \bE[Ex_0] \text{ and } \lim_{t \to \infty} \bE\big[\left\|E\tilde{x}(t)\right\|^2\big] = 0 \right\}
	\end{align*}
	Since both cost functionals can be minimized independently, we have
	$W_+(Ex_0) = V_+^{\rm d}(Ex_0) + W_+^{\rm s}(Ex_0)$ for each $x_0 \in \Vinit$. Since \eqref{eq:OCP} is feasible and since \eqref{eq:ode} and \eqref{eq:sden} are simultaneously (mean-square) stabilizable in the behavioral sense or not, we must have $V_+^{\rm d}(Ex_0) < \infty$ and $W_+^{\rm s}(Ex_0) < \infty$ for each $x_0 \in \Vinit$. Hence the system $[E,A,B,N_\beta]$ is mean-square stabilizable. Moreover, due to the feasibility of~\eqref{eq:OCP}, we have $V_+^{\rm d}(Ex_0) >- \infty$ for each $x_0 \in \Vinit$. Then according to~\citet[Theorem~3.11]{RV18}, there exists a solution $P \in \bR^{n \times n}$ of the KYP inequality~\eqref{eq:kyp}.

\end{proof}

\begin{remark} If the optimal control problem~\eqref{eq:OCP} is feasible, then due to the stabilizability of \eqref{eq:ode} in the behavioral sense and~\citet[Theorem~3.11]{RV18}, we can even infer the existence of a maximal solution $P_+ \in \bR^{n \times n}$ of~\eqref{eq:kyp}, i.\,e.,
\begin{equation*}
  P_+ \ge_{E\Vdiff} P 
\end{equation*}
for all solutions $P \in \bR^{n\times n}$ of~\eqref{eq:kyp}.
\end{remark}

% 	\begin{theorem}
% 		Let $[E,A,B,N,w] \in \Sigma_{n,m,n_w}$ and $Q = Q\trasp \in \bR^{n\times 
% n}$, $S \in \bR^{n\times m}$, and $R = R\trasp \in \bR^{m\times m}$ be given.
% 		The following two statements are equivalent for $P\in \bR^{n\times n}$:
% 		\begin{enumerate}[a)]
% 			\item It holds that $P = P\trasp$ and $V: E\Vdiff \to \bR$ is a storage function with $V(Ex_0) =  x_0\trasp E\trasp P E x_0$ and $\mu(Ex(t)) = \frac{1}{2} \textcolor{red}{\tr} \left( w(t)\trasp P w(t)\right)$.  
% 			\item The matrix $P$ solves the~\emph{Kalman-Yakubovich-Popov} 
% (KYP) 
% inequality
% 			\begin{align} 
% 			\begin{bmatrix}
% 			A\trasp P E + E\trasp P A + Q & E\trasp PB+S\\B\trasp PE+S\trasp & R
% 			\end{bmatrix} \geq_{\Vsys} 0, \quad P = P\trasp . \label{eq:kyp}
% 			\end{align} 
% 		\end{enumerate} \label{thm:kyp}
% 	\end{theorem}

If $E = I_n$ and the weight matrix $R$ is invertible, then the 
solution of a feasible optimal control problem is given in terms of the solution of an algebraic Riccati equation, see Section~\ref{sec:formulation_nonsingular}. A 
generalization of this algebraic Riccati equation for a possible singular matrix $E$ and a general weight $\left[ \begin{smallmatrix} Q & S \\ S\trasp & R \end{smallmatrix} \right]$ is presented in the following.
	\begin{definition}[{\citet[Definition 2.5]{RV18}}]
		Let $[E,A,B,N_\beta] \in \Sigma_{n,m,n_w}$ with $N_\beta(t)=N\mathrm{e}^{\beta t}$ and $\beta < 0$  be well-posed and let $Q = Q\trasp \in \bR^{n\times 
n}$, $S \in \bR^{n\times m}$, and $R = R\trasp \in \bR^{m\times m}$ be given.
		A triple $(X,K,L) \in \bR^{n\times n} \times \bR^{q \times n} \times 
\bR^{q \times m}$ that fulfills
		\begin{align}
		\begin{bmatrix}
		A\trasp X E + E\trasp XA+Q&E\trasp XB+S\\B\trasp XE+S\trasp & R
		\end{bmatrix} =_{\Vsys} \begin{bmatrix} K\trasp \\L\trasp	
\end{bmatrix}  \begin{bmatrix} K &L	\end{bmatrix} \label{eq:lure} 
		\end{align}
		and 
		\begin{align*}
		\rk_{\bR[s]} \begin{bmatrix}
		-sE+A & B \\ K & L
		\end{bmatrix} = n+q
		\end{align*}
		is called solution of the~\emph{Lur'e equation}, where $\bR[s]$ denotes 
the ring of  polynomials with coefficients in $\bR$. 
		
		A solution $(X,K,L) \in \bR^{n\times n}\times \bR^{q\times n} \times 
\bR^{q\times m} $ of the Lur'e equation is called~\emph{stabilizing}, if 
additionally 
		\begin{align*}
		\rk \begin{bmatrix}
		-\lambda E +A & B \\ K & L
		\end{bmatrix} = n+q  \quad \forall \lambda \in \mathbb{C}^{+} .
		\end{align*}\label{def:lur}
	\end{definition}
Note that if $(P_+,K,L)\in \bR^{n\times n}\times \bR^{q\times n} \times \bR^{q\times m}$ is a stabilizing solution of the Lur'e equation~\eqref{eq:lure}, then $P_+$ is a maximal solution of the KYP inequality~\eqref{eq:kyp}. Conversely, if $P_+ \in \bR^{n \times n}$ is a maximal solution of~\eqref{eq:kyp}, then there exist $K \in \bR^{q \times n}$ and $L \in \bR^{q \times m}$ such that $(P_+,K,L)$ is a stabilizing solution of~\eqref{eq:lure}~\citep{RRV15}.
	
Now we can show the following result which is an adaptation of a result in \citet{RV18}.
	\begin{theorem}
		Let $[E,A,B,N_\beta] \in \Sigma_{n,m,n_w}$ with $N_\beta(t)=N\mathrm{e}^{\beta t}$ and $\beta < 0$  be well-posed and let $Q = Q\trasp \in \bR^{n\times 
n}$, $S \in \bR^{n\times m}$, and $R = R\trasp \in \bR^{m\times m}$ be given. Then the following statements are 
equivalent:
		\begin{enumerate}[a)]
			\item The optimal control problem is feasible, i.\,e., $W_+(Ex_0) 
\in \bR$ for all $x_0 \in \Vinit$. 
			\item The system $[E,A,B,N_\beta]$ is mean-square stabilizable  and the 
KYP 
inequality~\eqref{eq:kyp} has a maximal solution $P_+ = P_+\trasp \in 
\bR^{n\times n}$.
			\item The system $[E,A,B,N_\beta]$ is mean-square stabilizable and there 
exists a storage function $V:E\Vdiff \to \bR$. 
			\item  There exist $q \in \mathbb{N}_0$, $K\in\bR^{q\times n}$, and 
$L \in \bR^{q\times m}$ such that $(P_+,K,L)$ is a stabilizing solution of the 
Lur'e equation~\eqref{eq:lure}. 
		\end{enumerate}
			In the case where the above statements are valid, we have
		\begin{enumerate}[i)]
			\item $W_+(Ex_0) = \tr\big(P_+ \bE\big[Ex_0 (Ex_0)\trasp]\big) - 
\frac{1}{2\beta}\tr\big( P_+ NN\trasp\big) \quad \forall x_0 \in \Vinit$. 
			\item For all $x_0 \in \Vinit$ and solutions $(x,u)$ of 
$[E,A,B,N_\beta]$ on $\bR_{\geq 0}$ with $Ex(0) = Ex_0$ and $\lim\limits_{t \to 
\infty} \bE\big[\Vert Ex(t) \Vert^2\big] = 0 $ it holds that
			\begin{multline}
			J(x,u,\bR_{\ge 0}) = \tr\big(P_+ \bE\big[ E x_0 (Ex_0)\trasp \big]\big) - 
\frac{1}{2\beta}\tr\big( P_+ NN\trasp\big) \\ + \intl{0}{\infty}{\bE\left[ \Vert K x(t) + 
Lu(t) \Vert^2\right]}{t} . \label{eq:jurepres}
			\end{multline}
			\item A solution $(x\st,u\st)$ is an optimal control, if 
and 
only if  $\bE[\Vert Kx\st + Lu\st\Vert^2] = 0$. 
		\end{enumerate} \label{thm:eq_feas_lure}
	\end{theorem}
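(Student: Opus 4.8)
The plan is to reduce everything to the deterministic descriptor LQ theory of \citet{RV18} and then account for the noise by a single Itô computation. The backbone is the decomposition $W_+ = V_+^{\rm d} + W_+^{\rm s}$ established in the proof of Theorem~\ref{thm:wopt}, which splits the problem into the deterministic DAE problem~\eqref{eq:ode} for the mean $\hat{x}=\bE[x]$ and the zero-mean SDAE problem~\eqref{eq:sden} for the fluctuation $\tilde{x}=x-\hat{x}$, together with the observation that the KYP inequality~\eqref{eq:kyp}, the Lur'e equation~\eqref{eq:lure}, and the notion of storage function are all independent of $N_\beta$. The equivalences (a)--(d) therefore coincide with the corresponding deterministic statements for~\eqref{eq:ode}; the noise only adds the finite constant $-\frac{1}{2\beta}\tr(P_+NN\trasp)$, which cannot destroy feasibility. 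The genuinely new ingredient is the cost representation, obtained from Itô's formula.

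For the equivalences I would argue as follows. The implication (a)$\Rightarrow$(b) is exactly Theorem~\ref{thm:wopt} together with the subsequent Remark, yielding mean-square stabilizability and a maximal solution $P_+$ of~\eqref{eq:kyp}. The equivalence (b)$\Leftrightarrow$(d) is the content of the Note following Definition~\ref{def:lur}: a stabilizing Lur'e triple $(P_+,K,L)$ has maximal $P_+$, and conversely every maximal KYP solution extends to a stabilizing triple by~\citet{RRV15}. For (b)$\Leftrightarrow$(c), one direction is the computation preceding Theorem~\ref{thm:wopt}, where $V(Ex_0)=(Ex_0)\trasp P_+ Ex_0$ is shown to be a storage function; the converse --- that a possibly non-quadratic storage function together with stabilizability forces maximal solvability of~\eqref{eq:kyp} --- is the deterministic statement \citet[Theorem~3.11]{RV18}. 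That these deterministic equivalences transfer to our setting rests on two facts already available: mean-square stabilizability coincides with deterministic stabilizability of~\eqref{eq:ode} (the characterization lemma and the simultaneous stabilizability noted in the proof of Theorem~\ref{thm:wopt}), and the data of~\eqref{eq:kyp} and~\eqref{eq:lure} do not involve $N_\beta$.

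The heart of the proof is the representation~\eqref{eq:jurepres}, from which (i), (iii) and the closing implication (d)$\Rightarrow$(a) all follow. Applying Itô's formula to $V(Ex(t))=(Ex(t))\trasp P_+ Ex(t)$ along a solution of~\eqref{eq:lqgs} and taking expectations (so that the Itô integral drops out), the Lur'e identity~\eqref{eq:lure}, valid on $\Vsys$ where the trajectories live, lets me replace $\left(\begin{smallmatrix}x\\u\end{smallmatrix}\right)\trasp\left[\begin{smallmatrix}Q&S\\S\trasp&R\end{smallmatrix}\right]\left(\begin{smallmatrix}x\\u\end{smallmatrix}\right)$ by $\|Kx+Lu\|^2 - 2x\trasp E\trasp P_+(Ax+Bu)$. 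This gives, for $0\le t_1\le t_2$,
\begin{multline*}
J(x,u,[t_1,t_2]) = \bE[V(Ex(t_1))] - \bE[V(Ex(t_2))] \\ + \intl{t_1}{t_2}{\mathrm{e}^{2\beta t}\tr(P_+NN\trasp)}{t} + \bE\left[\intl{t_1}{t_2}{\|Kx(t)+Lu(t)\|^2}{t}\right].
\end{multline*}
Letting $t_1=0$ and $t_2\to\infty$, the boundary term vanishes since $|\bE[V(Ex(t))]|\le\|P_+\|\,\bE[\|Ex(t)\|^2]\to 0$ by the terminal constraint, while $\int_0^\infty\mathrm{e}^{2\beta t}\,\mathrm{d}t = -\frac{1}{2\beta}$ as $\beta<0$; this is precisely~\eqref{eq:jurepres}. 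Statement (i) follows because the right-hand side is a control-independent constant plus the nonnegative term $\intl{0}{\infty}{\bE[\|Kx+Lu\|^2]}{t}$, and (iii) is then immediate from (i) and (ii). Simultaneously, (d)$\Rightarrow$(a) drops out: the representation bounds $J$ below by a finite constant, and the construction below realizes this bound, so $W_+(Ex_0)\in\bR$.

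The main obstacle is the attainability underlying (i): that $\inf\intl{0}{\infty}{\bE[\|Kx+Lu\|^2]}{t}=0$ over admissible controls. In the deterministic case this is settled in~\citet{RV18} through the zero dynamics of the stabilizing Lur'e solution, which furnish a feedback enforcing $Kx+Lu\equiv 0$ while driving the state to zero. I would adapt that construction to both components of the decomposition: for the mean it applies verbatim to~\eqref{eq:ode}, and for the fluctuation I would verify, using the differential Lyapunov estimate from the proof of the stabilizability lemma, that the same feedback keeps $\bE[\|E\tilde{x}(t)\|^2]$ integrable and convergent to zero, so that the resulting control lies in $L^{2,\rm loc}_{\mathcal{F}}$ and meets the mean-square terminal constraint. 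Confirming that this control stays within the admissible solution class and respects well-posedness is the delicate point distinguishing the stochastic argument from its deterministic template.
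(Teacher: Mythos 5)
Your proposal follows essentially the same route as the paper: the equivalences are closed by invoking Theorem~\ref{thm:wopt} and \citet[Theorems~3.11 and 3.13]{RV18} (the paper routes the cycle as a)\,$\Rightarrow$\,b)\,$\Rightarrow$\,c)\,$\Rightarrow$\,a), getting c)\,$\Rightarrow$\,a) from the dissipation inequality rather than your direct c)\,$\Rightarrow$\,b)), and the representation~\eqref{eq:jurepres} is obtained by exactly the paper's computation --- It\^{o}'s formula applied to $(Ex(t))\trasp P_+ Ex(t)$, the Lur'e identity on $\Vsys$, the terminal constraint killing the boundary term, and $\int_0^\infty \mathrm{e}^{2\beta t}\,\mathrm{d}t = -\tfrac{1}{2\beta}$ producing the noise constant. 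You are in fact more explicit than the paper about the attainability of the infimum in (i), which the paper settles only by asserting that one can proceed analogously to \citet{RV18}.
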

	\begin{proof}
		Assertion a) $\Rightarrow$ b) follows from Theorem~\ref{thm:wopt}. Statement b) $\Rightarrow$ c) is trivial and c) $\Rightarrow$ a) follows from the dissipation inequality
 since for every solution $(x,u)$ of $[E,A,B,N_\beta]$ with $x_0 \in \Vinit$ and $\lim_{t \to \infty} \bE \big[ \big\| Ex(t)\big\|^2\big] = 0$, we have
 \begin{equation*}
  -\infty < \bE[V(Ex_0)] + \bE\left[\intl{0}{\infty}{\mu_N(Ex(t))}{t}\right] \leq J(x,u,\bR_{\ge 0}).
 \end{equation*}
 The equivalence b) $\Leftrightarrow$ d) follows immediately 
from~\citet[Theorem 3.13]{RV18}. 
		 
		Next we show i)--iii):
		As $P_+ = P_+\trasp\in \bR^{n\times n}$ is the maximal solution of the KYP 
		inequality, it follows from~\citet[Theorem 4.5]{RV18} that there exist a $q \in 
		\mathbb{N}_0$, $K\in \bR^{q\times n}$, and $L \in \bR^{q\times m}$ such that 
		$(P_+,K,L)$ is stabilizing solution of the Lur'e equation~\eqref{eq:lure}. Moreover, from It\^{o}'s formula we obtain 
		\begin{align*}
           \tr\big( P_+ & \bE\big[E x_0 (Ex_0)\trasp \big]\big) - \frac{1}{2\beta}\tr\big( P_+ NN\trasp\big) \\ 
          &= \bE\big[ x_0\trasp E\trasp P_+ E x_0\big] - \frac{1}{2\beta}\tr\big( P_+ NN\trasp\big) \\ 
			&= -\bE\left[\intl{0}{\infty}{}{\left( (Ex(t))\trasp P_+ (Ex(t)) 
		\right)}\right] \\
			&= \bE\left[ \intl{0}{\infty}{\begin{pmatrix}
				x(t) \\ u(t) \end{pmatrix}\trasp \begin{bmatrix} -A\trasp P_+E - 
		E\trasp P_+ A & -E\trasp P_+ B \\ -B\trasp P_+ E& 0 \end{bmatrix} 
			\begin{pmatrix}
				x(t) \\ u(t) \end{pmatrix}}{t} \right] \\
		&= \bE\left[ \intl{0}{\infty}{\begin{pmatrix}
				x(t) \\ u(t) \end{pmatrix}\trasp \left(  \begin{bmatrix} Q  & S \\ 
		S\trasp& R \end{bmatrix} -
				\begin{bmatrix} K\trasp K  & K\trasp L \\ L\trasp K& L\trasp L 
		\end{bmatrix} \right) 
				\begin{pmatrix} 
				x(t) \\ u(t) \end{pmatrix}}{t} \right] \\
			&= J(x,u,\bR_{\ge 0}) - \intl{0}{\infty}{\bE\left[ \Vert Kx(t) + Lu(t) 
		\Vert^2 \right]}{t} , 
		\end{align*}
		which is assertion ii). In particular, a solution $(x\st,u\st)$ of $[E,A,B] $ on $\bR_{\geq 0}$ with  $Ex(0) = Ex_0$ and  $\lim\limits_{t \to \infty} \bE\big[\Vert Ex(t)\Vert^2\big] = 0$ is 
		an optimal control, if and only if $ Kx\st + Lu\st= 0$ almost surely (assertion iii)). 
			The main difference to the deterministic case is the extra term $- \frac{1}{2\beta}\tr\big( P_+ NN\trasp\big)$ due to the function $\mu_N(\cdot)$ in the dissipation inequality~\eqref{eq:dissipation}. Thus we can proceed analogously to~\cite{RV18} by taking the infimum over all possible solutions $(x,u)$ with $Ex(0) = Ex_0$ and $\lim\limits_{t \to \infty} \bE\big[\Vert Ex(t)\Vert^2\big] = 0$. It follows that $W_+(Ex_0) = \tr \big(P_+ \bE\big[E x_0 (Ex_0)\trasp\big]\big)- \frac{1}{2\beta}\tr\big( P_+ NN\trasp\big)$ for all $x_0 \in \Vinit$, since $\intl{0}{\infty}{\bE\left[ \Vert Kx(t) + Lu(t) 
		\Vert^2 \right]}{t}$ can be made arbitrarily small which gives assertion i).
	\end{proof}
	
		\subsection{Regularity of the problem} \label{sec:reg}
	Next, we want to characterize the regularity of the optimal control 
problem. From Theorem~\ref{thm:eq_feas_lure}, we have seen that the solution $(x\st,u\st)$ of $[E,A,B,N_\beta]$ with $Ex(0) = Ex_0$ and $\lim_{t \to \infty}\bE\big[\Vert 
Ex(t)\Vert^2\big] = 0$ is an optimal control if and only if it fulfills the optimality DAE
	\begin{multline}
	\dd\begin{bmatrix}
	E &0\\0&0 \end{bmatrix} \begin{pmatrix}
	x\st(t) \\u\st(t) \end{pmatrix} = \begin{bmatrix} A & B \\ K & L \end{bmatrix} 
\begin{pmatrix}
	x\st(t) \\u\st(t) \end{pmatrix} \dd t + \begin{bmatrix}
	N \\ 0
	\end{bmatrix}\mathrm{e}^{\beta t}  \dd w(t), \\ Ex(0) = Ex_0,  \quad \lim_{t \to \infty}\bE\big[\Vert 
Ex(t)\Vert^2\big] = 0. \label{eq:opt_dae} 
	\end{multline}
	Moreover, as no noise source appears in the algebraic constraints, this DAE is well-posed. Now we can immediately state a regularity result which directly follows from \citet[Theorem~4.7]{RV18}.
	\begin{theorem}
	Let $[E,A,B,N_\beta] \in \Sigma_{n,m,n_w}$ with $N_\beta(t)=N\mathrm{e}^{\beta t}$ and $\beta < 0$  be well-posed and let $Q = Q\trasp \in \bR^{n\times 
n}$, $S \in \bR^{n\times m}$, and $R = R\trasp \in \bR^{m\times m}$ be given.
		Let the optimal control problem~\eqref{eq:OCP} be feasible and $(P,K,L) \in \bR^{n 
\times n} \times \bR^{q\times n}\times \bR^{q\times m}$ be a stabilizing 
solution of the Lur'e equation~\eqref{eq:lure}. Then the following two 
statements are equivalent:
		\begin{enumerate}[a)]
			\item The optimal control problem is regular.
			\item The conditions 
			\begin{equation}
			\ker \begin{bmatrix}
			-\imath\omega E + A & B \\ K & L	\end{bmatrix} = \{0\} \quad \forall\,\omega \in \R, 
\label{cond:pencil} \end{equation}
and
 \begin{multline}
			\begin{bmatrix} E & 0\\ 0&0	\end{bmatrix} \cdot \Vsys + 
\begin{bmatrix} A & B\\ K &L		\end{bmatrix} \cdot \left( \left(\ker E 
\times \bR^{m}\right) \cap \Vsys\right)  \\   = \begin{bmatrix} 
E & 0\\ 0&0	\end{bmatrix} \cdot \Vsys + \begin{bmatrix} A & B\\ K &L		
\end{bmatrix} \cdot \Vsys \label{cond:sys_space}
			\end{multline}
			are fulfilled.
		\end{enumerate}
	\end{theorem}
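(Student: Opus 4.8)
The plan is to reduce the stochastic regularity question to the deterministic uniqueness result~\citet[Theorem~4.7]{RV18} by exploiting the structure of the optimality DAE~\eqref{eq:opt_dae}. First I would recall from Theorem~\ref{thm:eq_feas_lure}~iii) that, under feasibility, a solution $(x\st,u\st)$ of $[E,A,B,N_\beta]$ with $Ex(0) = Ex_0$ and $\lim_{t\to\infty}\bE\big[\Vert Ex(t)\Vert^2\big] = 0$ is an optimal control if and only if $Kx\st + Lu\st = 0$ almost surely; equivalently, $(x\st,u\st)$ solves the optimality DAE~\eqref{eq:opt_dae}. Hence regularity is precisely the statement that~\eqref{eq:opt_dae} admits a unique solution for every consistent initial condition $Ex_0 \in \EVinit$.

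The key observation is that uniqueness for the linear SDAE~\eqref{eq:opt_dae} is governed by a purely deterministic object. Given two solutions $(x_1,u_1)$ and $(x_2,u_2)$ of~\eqref{eq:opt_dae} driven by the same Wiener process $w$ and sharing the same initial condition $Ex(0)=Ex_0$, their difference $(\delta x,\delta u):=(x_1-x_2,u_1-u_2)$ satisfies, for almost every $\omega \in \Omega$, the homogeneous deterministic DAE
\begin{equation*}
\frac{\dd}{\dd t}\begin{bmatrix} E & 0 \\ 0 & 0 \end{bmatrix}\begin{pmatrix} \delta x(t) \\ \delta u(t)\end{pmatrix} = \begin{bmatrix} A & B \\ K & L \end{bmatrix}\begin{pmatrix} \delta x(t) \\ \delta u(t)\end{pmatrix},
\end{equation*}
since the diffusion term $\left[\begin{smallmatrix} N \\ 0 \end{smallmatrix}\right]\expi^{\beta t}$ cancels. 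Moreover $E\delta x(0) = 0$ almost surely, and the triangle inequality gives $\bE\big[\Vert E\delta x(t)\Vert^2\big] \to 0$ as $t \to \infty$. Thus every trajectory $(\delta x(\cdot,\omega),\delta u(\cdot,\omega))$ lies in the finite-dimensional space of decaying solutions of the homogeneous optimality DAE with vanishing initial differential variable.

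Consequently, the SDAE~\eqref{eq:opt_dae} is uniquely solvable for each $Ex_0 \in \EVinit$ if and only if the only decaying solution of the homogeneous deterministic optimality DAE with $E\delta x(0)=0$ is the trivial one; equivalently, the deterministic optimality DAE associated with the stabilizing Lur'e solution $(P,K,L)$ is uniquely solvable in the sense of~\citet{RV18}. Applying \citet[Theorem~4.7]{RV18} to this deterministic problem yields the equivalence of its unique solvability with the two conditions~\eqref{cond:pencil} and~\eqref{cond:sys_space}, which is exactly the asserted equivalence a) $\Leftrightarrow$ b).

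The step requiring the most care is the passage from the mean-square decay $\bE\big[\Vert E\delta x(t)\Vert^2\big]\to 0$ to the deterministic decay of the pathwise trajectories: since each $(\delta x(\cdot,\omega),\delta u(\cdot,\omega))$ already solves a \emph{deterministic} linear DAE, its asymptotic behaviour is dictated by the finitely many modes of the pencil $s\left[\begin{smallmatrix} E & 0 \\ 0 & 0\end{smallmatrix}\right] - \left[\begin{smallmatrix} A & B \\ K & L\end{smallmatrix}\right]$, and one must check that the mean-square condition forces precisely the same mode selection as the deterministic decay condition used in~\citet{RV18}. Once this is in place, the reduction to the deterministic theorem is immediate.
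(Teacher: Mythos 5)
Your proposal is correct and follows essentially the same route as the paper: both reduce regularity to the unique solvability of the optimality DAE~\eqref{eq:opt_dae} and then invoke the deterministic result \citet[Theorem~4.7]{RV18}, using that the additive noise enters only the differential part and hence cancels in the difference of two solutions. Your write-up is in fact more explicit than the paper's own (very terse) proof about why the stochastic uniqueness question collapses to the deterministic one.
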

	
	\begin{proof}
    We see that if $[E,A,B,N_\beta]$ is well-posed, then also the optimality DAE~\eqref{eq:opt_dae} is well-posed. We further know that~\eqref{cond:pencil} 
and~\eqref{cond:sys_space} is equivalent to the unique solvability of the 
deterministic system. Thus, we conclude that there exists a unique solution of the stochastic DAE~\eqref{eq:opt_dae} and the optimal control problem is 
regular, if and only if~\eqref{cond:pencil} and~\eqref{cond:sys_space} are 
fulfilled.   
	\end{proof}
	\begin{remark} If the optimal control problem~\eqref{eq:OCP} is regular, then in the Lur'e equation~\eqref{eq:lure} we have $q=m$. If further $L \in \bR^{m \times m}$ is invertible, then the second row of the optimality DAE~\eqref{eq:opt_dae} can be resolved to obtain the explicit feedback control law $u(t) = -L^{-1}K x(t)$.
	\end{remark}

	\section{Conclusions and Outlook}
	
	In this paper, we have discussed feasibility and regularity of optimal 
control problems for linear stochastic descriptor system with additive noise by 
extending the results of the deterministic case to the stochastic problem. The 
feasibility of the control problem has been characterized by the dissipation 
inequality and the Kalman-Yakubovich-Popov inequality. We have constructed the 
solution of the problem using the stabilizing solution of the Lur'e equation and 
have derived conditions for existence and uniqueness of optimal controls.     
	
	Instead of the discounted cost functional~\eqref{eq:ju}, other functionals 
are often used. In the following we give a short overview of the used cost 
functionals and give an outlook on how to deal with these cost functionals in 
the same manner as with the discounted functional. 
	
	For example, ~\citet{KS72} used the time-average functional
	\[
	J^{\rm a}(x,u,[0,\te]) = \frac{1}{\te} \bE\left[ \intl{0}{\te}{\begin{pmatrix}
		x(t) \\ u(t) \end{pmatrix}\trasp \begin{bmatrix} Q & S \\ S\trasp& 
R \end{bmatrix} 
		\begin{pmatrix}
		x(t) \\ u(t) \end{pmatrix}}{t} \right]
	\]
	and especially, for $\te = \infty$ we obtain 
	\[
	J^{\rm a}(x,u,\bR_{\ge 0}) = \lim\limits_{\te \to \infty}\frac{1}{\te} \bE\left[ 
\intl{0}{\te}{\begin{pmatrix}
		x(t) \\ u(t) \end{pmatrix}\trasp \begin{bmatrix} Q & S \\ S\trasp& 
R \end{bmatrix} 
		\begin{pmatrix}
		x(t) \\ u(t) \end{pmatrix}}{t} \right].
	\] 
	Then, for the standard case with $E = I_n$, $\left[\begin{smallmatrix} Q & S \\ S\trasp & R \end{smallmatrix}\right]\geq 0$, and $R> 0$, the value 
function is given by
	\[
	W_+(x_0) = \tr\big(P^{\rm{a}}NN\trasp\big),
	\]
	where $P^{\rm{a}}$ solves the algebraic Riccati equation 
	\[
	0 = P^{\rm{a}} A + 
A\trasp P^{\rm{a}} + Q - (P^{\rm{a}} B + S)\Rinv (P^{\rm{a}} B + S)\trasp, 
	\]
	which coincides with the Riccati equation arising from the discounted cost 
functional. Thus, one can probably show similar results as presented in this 
paper. Note that in this case, the value function only depends on the matrix $N$ 
and is independent of the initial condition. Moreover, the Lur'e equation and the KYP 
inequality have the same form as for the discounted functional.    
	
% 	Another cost functional, used by e.\,g.,~\citet{DPD13}, is an exponential cost 
% functional of the form
% 	\[
% 	J_e(x,u,0,t_1) = \bE\left[ \exp\left(\frac{\mu}{2} \int_{0}^{t_1} 
% \begin{pmatrix}
% 	x(t) \\ u(t) \end{pmatrix}\trasp \begin{bmatrix} Q & S \\ S\trasp& R 
% \end{bmatrix} 
% 	\begin{pmatrix}
% 	x(t) \\ u(t) \end{pmatrix}
% 	\dd t \right)\right]  .
% 	\]
% 	For $\te = \infty$ and the standard case,  i.\,e., $E = I_n, Q\geq 0, R> 0$, 
% ~\citet{DPD13} consider
% 	\[
% 	J(x,u,0,\infty) = \frac{1}{\mu} \limsup\limits_{\te \to \infty}  
% \log(J_e(x,u,0,\te))
% 	\]
% 	and obtained as optimal value function
% 	\[
% 	\tilde{W}_+(x_0) = \tr\big(N\trasp P^{\rm{l}}N\big),
% 	\]
% 	where $P^{\rm{l}}$ solves
% 	\[
% 	0 = P^{\rm{l}}A+A\trasp P^{\rm{l}} -( P^{\rm{l}} B + S)\Rinv (P^{\rm{l}} B 
% +S)\trasp  + \mu P^{\rm{l}} NN\trasp P^{\rm{l}} + Q .
% 	\]
% 	Thus, already for the case $E = I_n$, the algebraic Riccati equation has a 
% different form than in the previous cases. Hence, modifications to the 
% conditions for feasibility and the regularity of the solutions are needed. 
%	\todo[inline]{Zitat? Welches Kostenfunktional?}
%	\textcolor{red}{Sollen wir dieses Beispiel mit der exponentiellen Kostenfunktion \"uberhaupt bringen? Ich sehe n\"amlich gerade den Mehrwert der Information nicht und wir k\"onnten den Outlook ein bisschen k\"urzen. }
	The whole situation changes when considering DAEs with multiplicative noise, 
i.\,e.,
	\[
	\dd Ex(t) = (A x(t) + Bu(t)) \dd t + (Cx(t) + Du(t)) \dd w(t) , \quad Ex(0) 
= Ex_0 ,
	\]
	and with the cost functional 
	\[
			J^{\rm m}(x,u,\bR_{\ge 0}) = \bE^0\left[ \int_{0}^{\infty} 
		\begin{pmatrix}
		x(t) \\ u(t) \end{pmatrix}\trasp \begin{bmatrix} Q & S \\ S\trasp& R 
		\end{bmatrix} 
		\begin{pmatrix}
		x(t) \\ u(t) \end{pmatrix}
		\dd t \right], 
	\]
	where $\bE^0[\cdot]:= \bE[\cdot \,\vert\, \mathcal{F}_0]$ denotes the conditional expectation. For the standard case with (possibly) indefinite weight matrices in the cost functional, the optimal value is
%	\todo[inline]{Hier auch noch $\tr(\bE[])$?}
%	\textcolor{red}{Nein, habe oben noch erg\"anzt, dass hier in der Literatur mit conditional expectations gearbeitet wird, das bedeutet $\bE^0(x_0) = x_0$. Ich w\"urde hier noch Chen, Li, Zhou: Stochastic linear quadratic regulators with indefinite control weight costs zitieren. }
	\[
	W^{\rm{m}}_+(x_0) = x_0\trasp P^{\rm{m}} x_0 ,
	\]
	where $P^{\rm{m}}$ solves the \emph{bilinear} Riccati equation
	\begin{multline*}
	0 = P^{\rm{m}} A + A\trasp P^{\rm{m}} + Q + C\trasp P^{\rm{m}} C \\ - 
(BP^{\rm{m}} + S + D\trasp P^{\rm{m}}C)\trasp (R+D\trasp P^{\rm{m}} D)^{-1} 
(BP^{\rm{m}} + S + D\trasp P^{\rm{m}}C), 
	\end{multline*}
	where $R + D\trasp P^{\rm{m}} D > 0$, see~\cite{CLZ98} for further details. 
	Thus, different linear matrix inequalities and Lur'e equations have to be derived and the theory developed for the deterministic case has to be extended. This is out of the scope of this paper and subject of future considerations.

	\section*{Funding}
	L.-M. Pfurtscheller was supported by a scholarship of the Vizerektorat f\"ur Forschung, University of Innsbruck and by a scholarship of the German Academic Exchange Service (DAAD).

	\bibliographystyle{apacite} 
	\bibliography{biblio}
	
\end{document}